\newtheorem{thm}{Theorem}[section]
\newtheorem{prop}[thm]{Proposition}
\newtheorem{lem}[thm]{Lemma}
\newtheorem{cor}[thm]{Corollary}
\theoremstyle{definition}
\newtheorem{definition}[thm]{Definition}
\newtheorem{remark}[thm]{Remark}
\newtheorem{ex}[thm]{Example}
\numberwithin{equation}{section}
\newtheorem*{claim}{Claim}
\DeclareMathOperator{\End}{\mathrm{End}}
\renewcommand{\to}{\xymatrix@1@=15pt{\ar[r]&}}
\renewcommand{\rightarrow}{\xymatrix@1@=15pt{\ar[r]&}}
\renewcommand{\leftarrow}{\xymatrix@1@=15pt{&\ar[l]}}
\renewcommand{\mapsto}{\xymatrix@1@=15pt{\ar@{|->}[r]&}}
\renewcommand{\twoheadrightarrow}{\xymatrix@1@=18pt{\ar@{->>}[r]&}}
\renewcommand{\hookrightarrow}{\xymatrix@1@=15pt{\ar@{^(->}[r]&}}
\newcommand{\congpf}{\xymatrix@L=0.6ex@1@=15pt{\ar[r]^-\sim&}}
\renewcommand{\cong}{\simeq}
\newcommand{\LLV}{\mathfrak{g}_{\mathrm{tot}}(X)}
\newcommand{\Spin}{\underline{\mathrm{Spin}}(H^2(X,\mathbb{Q}),q)}
\newcommand{\SO}{\underline{\mathrm{SO}}(H^2(X,\mathbb{Q}),q)}
\begin{document}

\title[]{The Looijenga--Lunts--Verbitsky algebra and Verbitsky's Theorem}

\author[A.\ Bottini]{Alessio Bottini}

\address{Dipartimento di Matematica, Universit\`a di Roma Tor Vergata, Via della Ricerca Scientifica 1, 00133, Roma, Italia}
\address{
Universit\'e Paris-Saclay, CNRS, Laboratoire de Math\'ematiques d'Orsay, Rue Michel Magat, Bat.~307, 91405 Orsay, France}
\email{bottini@mat.uniroma2.it}

\begin{abstract} \noindent
In these notes we review some basic facts about the LLV Lie algebra. It is a rational Lie algebra, introduced by Looijenga--Lunts and Verbitsky, acting on the rational cohomology of a compact K\"ahler manifold. We study its structure and describe one irreducible component of the rational cohomology in the case of a compact hyperk\"ahler manifold.

\vspace{-2mm}
\end{abstract}

\maketitle
{\let\thefootnote\relax\footnotetext{This review was prepared in the context of the
seminar organized by the ERC Synergy Grant ERC-2020-SyG-854361-HyperK.
The talk was delivered on April 23, 2021.}}
\marginpar{}

\section{Introduction}
\subsection{}
Let $V=\bigoplus_{k \in \mathbb{Z}} V_k $ be a finite dimensional graded vector space over a field $\mathbb{F}$ of characteristic $0$, and denote by $h$ the operator: 
\[ h|_{V_k}=k \cdot \mathrm{id}.\]

\begin{definition}
Let $e \colon V \rightarrow V$ be a degree $2$ endomorphism. We say $e$ has the \textit{Lefschetz property} if
\[ e^k \colon V_{-k}  \rightarrow V_k\]
is an isomorphism.
\end{definition}

\begin{remark}
The degree two operators with the Lefschetz property form a Zariski open subset of $\End_2(V)$. 
\end{remark}

\begin{thm}[Jacobson--Morozov, {{\cite[Theorem $3$]{Jac51}}}]\label{thm:JM}
An operator $e$ has the Lefschetz property if and only if there exists a unique degree $-2$ endomorphism $f\colon V \rightarrow V$ such that 
\[ [e,f]=h.\]
Moreover, if $L \subset \End(V)$ is a semisimple Lie subalgebra and $e,h \in L$, then $f \in L$. 
\end{thm}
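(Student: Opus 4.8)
The plan is to read the relation $[e,f]=h$ together with the grading as the data of an $\mathfrak{sl}_2$-triple and then reduce everything to $\mathfrak{sl}_2$-representation theory. Note first that a degree $d$ operator $\phi$ satisfies $[h,\phi]=d\,\phi$; hence any degree $-2$ endomorphism $f$ with $[e,f]=h$ automatically satisfies $[h,e]=2e$ and $[h,f]=-2f$, so $(e,h,f)$ is an $\mathfrak{sl}_2$-triple and $V$ is a finite dimensional $\mathfrak{sl}_2$-module whose weight space decomposition is precisely the given grading $V=\bigoplus_k V_k$. Decomposing $V$ into irreducibles and using that on the irreducible of highest weight $n$ the map $e^k$ carries the weight $-k$ space isomorphically onto the weight $k$ space, one obtains immediately that $e^k\colon V_{-k}\xrightarrow{\sim}V_k$ for every $k$; this is the ``only if'' direction.

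For the converse, assume $e$ has the Lefschetz property. The first and main step is the Lefschetz (primitive) decomposition. Put $P_i:=\ker\bigl(e^{1-i}\colon V_i\longrightarrow V_{2-i}\bigr)$ for $i\le 0$. Using only the isomorphisms $e^k\colon V_{-k}\xrightarrow{\sim}V_k$ one checks, for $i\le 0$, that $e^{1-i}|_{V_i}$ is surjective and that $e|_{V_{i-2}}$ is injective with image meeting $P_i$ trivially, whence $V_i=P_i\oplus e(V_{i-2})$; iterating gives the full decomposition $V=\bigoplus_{i\le 0}\bigoplus_{j=0}^{-i}e^j(P_i)$ with each $e^j|_{P_i}$ injective. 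One then \emph{defines} $f$ on this decomposition by the standard $\mathfrak{sl}_2$ string formula, $f(e^jv):=j(-i-j+1)\,e^{j-1}v$ for $v\in P_i$ and $0\le j\le -i$, and the identity of coefficients $j(-i-j+1)-(j+1)(-i-j)=i+2j$ yields $[e,f]=h$. I expect the derivation of this decomposition from the bare Lefschetz property, together with the (routine) verification of $[e,f]=h$, to be the only part requiring real work; the rest is formal.

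Uniqueness follows by a weight argument. If $f_1,f_2$ both satisfy the relation then $g:=f_1-f_2$ has degree $-2$ and $[e,g]=0$. Regard $\End(V)$ as a module over the $\mathfrak{sl}_2$-triple $(e,h,f_1)$ under the commutator action; then $g$ has $\mathrm{ad}(h)$-weight $-2$ and is annihilated by $\mathrm{ad}(e)$. In any finite dimensional $\mathfrak{sl}_2$-module the kernel of $e$ is contained in the sum of the nonnegative weight spaces (it is spanned by highest weight vectors), so a weight $-2$ vector in it vanishes; thus $g=0$.

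For the last assertion, let $L\subseteq\End(V)$ be semisimple with $e,h\in L$. By Weyl's theorem $\End(V)=V\otimes V^{*}$ is a semisimple $L$-module for the commutator action, and the inclusion of $L$ into $\End(V)$ is $L$-equivariant, so there is an $L$-submodule $L'$ with $\End(V)=L\oplus L'$. Both $L$ and $L'$ are stable under $\mathrm{ad}(e)$ and $\mathrm{ad}(h)$, and since $h\in L$ the grading of $\End(V)$ is the $\mathrm{ad}(h)$-eigenspace decomposition, so $L'$ is graded and we may write $f=f_L+f_{L'}$ with $f_L\in L$, $f_{L'}\in L'$ both of degree $-2$. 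Applying $\mathrm{ad}(e)$ and using that it preserves $L$ and $L'$, the relation $[e,f]=h\in L$ splits as $[e,f_L]=h$ and $[e,f_{L'}]=0$; then $f_{L'}$ is a degree $-2$ endomorphism annihilated by $\mathrm{ad}(e)$, so the uniqueness argument (now with the triple $(e,h,f_L)$) gives $f_{L'}=0$, i.e.\ $f=f_L\in L$.
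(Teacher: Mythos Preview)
The paper does not actually supply a proof of this theorem; it is stated with a reference to Jacobson \cite[Theorem~3]{Jac51} and used as a black box thereafter. So there is no ``paper's proof'' to compare against.

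That said, your argument is correct and is precisely the standard route one takes for this statement. The only points worth flagging are minor. In the existence step, your claim that $e^{1-i}|_{V_i}$ is surjective and that $e|_{V_{i-2}}$ is injective with image transverse to $P_i$ does follow cleanly from the Lefschetz isomorphisms $e^{-i}\colon V_i\xrightarrow{\sim}V_{-i}$ and $e^{2-i}\colon V_{i-2}\xrightarrow{\sim}V_{2-i}$ (the latter factoring through $V_i$), so the dimension count $\dim P_i=\dim V_i-\dim V_{2-i}=\dim V_i-\dim V_{i-2}$ closes the decomposition; you might make this one line explicit. In the final step, note that you implicitly use that $(e,h,f_L)$ is already an $\mathfrak{sl}_2$-triple in order to invoke the highest-weight argument for $f_{L'}$; this is fine since $f_L$ has degree $-2$ and $[e,f_L]=h$, but it is worth saying. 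Otherwise the proof is complete.
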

We say that the triple $(e,h,f)$ is an $\mathfrak{sl}_2$-triple, the reason is that we can define a representation $\mathfrak{sl}_2(\mathbb{F}) \rightarrow \mathrm{End}(V) $ of the Lie algebra $\mathfrak{sl}_2(\mathbb{F})$ on the vector space $V$ as follows
\begin{equation*}
\begin{pmatrix}
0 & 1 \\
0 & 0 
\end{pmatrix} \mapsto e, \qquad
\begin{pmatrix}
1 & 0  \\
0 & -1 
\end{pmatrix}  \mapsto h, \qquad
\begin{pmatrix}
0 & 0  \\
1 & 0
\end{pmatrix} \mapsto f.
\end{equation*}

In the rest of these notes, we will mostly be interested in the graded rational vector space $V=H^{*}(X,\mathbb{Q})[N]$, where $X$ is a compact K\"ahler manifold of dimension $N$. Here $[m]$ indicates the shift by $m$, so that $V_0=H^N(X,\mathbb{Q})$. To any class $a \in H^2(X,\mathbb Q)$ we can associate the operator in cohomology obtained by taking cup product
\[ e_{a} \colon H^*(X,\mathbb Q) \rightarrow H^*(X,\mathbb Q), \quad  \omega \mapsto a. \omega.\]
The operator $h$ becomes
\[  h|_{H^{k}(X,\mathbb Q)} \coloneqq (k-N)\mathrm{id}.\]
From Theorem \ref{thm:JM} we see that if $e_{a}$ has the Lefschetz property (for example if $a$ is a K\"{a}hler class), there is an operator $f_{a}$ of degree $-2$ that makes $(e_{a},h,f_{a})$ an $\mathfrak{sl}_2$-triple. Moreover, the map 
$$ f\colon H^2(X,\mathbb{Q}) \dashrightarrow \End_{-2}(H^*(X,\mathbb{Q})),$$
that sends $a$ to the operator $f_{a}$ is defined on a Zariski open subset and rational.

\begin{remark}
If $a \in H^{1,1}(X,\mathbb Q)$ is K\"ahler, it follows from standard Hodge theory that everything can be defined at the level of forms. The dual operator is $f_{a}=*^{-1}e_{a}*$, where $*$ is the Hodge star operator. The $\mathfrak{sl}_2$-action preserves the harmonic forms, so it induces an action on cohomology. 
\end{remark}

\begin{definition}[{{\cite{LL97:LooijengaLunts, Verbitsky:thesis}}}]
Let $X$ be a compact K\"ahler manifold. The \textit{total Lie algebra} $\LLV$ of $X$ is the Lie algebra generated by the $\mathfrak{sl}_2$-triples 
$$  (e_{a},h,f_{a}), $$
where $a \in H^2(X,\mathbb Q)$ is a class with the Lefschetz property.
\end{definition}

The following is a general result about this Lie algebra for compact K\"ahler manifolds. Denote by $\phi$ the pairing on $H^*(X,\mathbb{C})$ given by 
\[ \phi(\alpha,\beta) = (-1)^q \int_X{\alpha.\beta}, \]
if $\alpha $ has degree $N+2q$ or $N+2q+1$.

\begin{prop}[{{\cite[Proposition $1.6$]{LL97:LooijengaLunts}}}]\label{LL:ss}
The Lie algebra $\LLV$ is semisimple and preserves $\phi$ infinitesimally. Moreover, the degree-$0$ part $\LLV_0$ is reductive. 
\end{prop}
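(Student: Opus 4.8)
The plan is to embed $\mathfrak g := \LLV$ into the Lie algebra $\mathfrak{aut}(\phi)$ of infinitesimal isometries of $\phi$, and then read off reductivity and semisimplicity from the $\mathbb Z$‑grading together with the defining $\mathfrak{sl}_2$‑triples. For the first point, since $\mathfrak{aut}(\phi) = \{A : \phi(A\alpha,\beta)+\phi(\alpha,A\beta)=0\}$ is a Lie subalgebra of $\End(H^{*}(X,\mathbb C))$, it suffices to check the generators. For $h$ this is immediate, as $\phi$ kills pairs of classes whose shifted degrees do not sum to zero, so $\phi(h\alpha,\beta)+\phi(\alpha,h\beta)$ equals (that degree sum)$\cdot\,\phi(\alpha,\beta)=0$. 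For $e_a$ with $a\in H^2$, if $\alpha$ has degree $N+2q$ then $a.\alpha$ has degree $N+2(q+1)$, and since $a$ has even degree $\phi(e_a\alpha,\beta)=(-1)^{q+1}\int_X a.\alpha.\beta=-(-1)^q\int_X\alpha.a.\beta=-\phi(\alpha,e_a\beta)$ (the odd case is identical). For $f_a$, note that $\mathfrak{aut}(\phi)$ is a product of two classical Lie algebras — orthogonal if $N$ is even, symplectic if $N$ is odd, one for the even and one for the odd part of $H^{*}(X)$ — hence semisimple whenever $\mathfrak g\neq 0$; applying the last clause of Theorem~\ref{thm:JM} with $L=\mathfrak{aut}(\phi)\ni e_a,h$ yields $f_a\in\mathfrak{aut}(\phi)$. (Alternatively, for $a$ in the Zariski‑dense locus of Kähler classes one has $f_a=*^{-1}e_a*$ and checks it directly, then extends by density.) Thus $\mathfrak g\subseteq\mathfrak{aut}(\phi)$.

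For semisimplicity I would first observe that $\mathfrak g$ is algebraic: it is the Lie algebra of the connected algebraic subgroup $G\subseteq\mathrm{Aut}(H^{*}(X,\mathbb C),\phi)$ generated by the unipotent one‑parameter groups $\exp(t\,e_a)$, $\exp(t\,f_a)$ (a subgroup generated by connected algebraic subgroups is again connected algebraic). Let $\mathfrak r=\mathrm{rad}(\mathfrak g)$; being an ideal and $\mathrm{ad}(h)$‑stable, it is a homogeneous ideal, hence stable under each $\mathfrak{sl}_2^{(a)}=\langle e_a,h,f_a\rangle$. The crux is that $\mathfrak r$ is central. Indeed, if $\mathfrak r$ had a nonzero element of nonzero degree, take a homogeneous $x\in\mathfrak r$ of maximal (hence positive, even) degree $M$; then $[e_a,x]=0$ for all Lefschetz $a$, so by density $[e_a,x]=0$ for all $a\in H^2$, and as $x$ is a highest‑weight vector of positive weight for $\mathfrak{sl}_2^{(a)}$ it lies in $\mathrm{im}(\mathrm{ad}\,e_a)\subseteq[\mathfrak g,\mathfrak g]$, so $x\in[\mathfrak g,\mathfrak g]\cap\mathfrak r$, the nilradical, which acts on $H^{*}(X)$ by nilpotent operators — and one rules such an $x$ out. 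Granting this, $\mathfrak r$ commutes with every $e_a$ and every $f_a$, so $\mathfrak r=Z(\mathfrak g)$ and $\mathfrak g$ is reductive. Since $Z(\mathfrak g)$ consists of semisimple operators and meets $[\mathfrak g,\mathfrak g]$ trivially, every nilpotent operator in $\mathfrak g$ — in particular every $e_a$ and $f_a$ — lies in $[\mathfrak g,\mathfrak g]$; as these generate $\mathfrak g$, we get $\mathfrak g=[\mathfrak g,\mathfrak g]$, i.e. $\mathfrak g$ is semisimple.

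Reductivity of $\LLV_0$ is then immediate: it is the centralizer $Z_{\mathfrak g}(h)$, and $h$ is a semisimple element of $\mathfrak g$ (since $\mathrm{ad}(h)$ is diagonalizable on the graded Lie algebra $\mathfrak g$), so the centralizer of a semisimple element in a semisimple Lie algebra is reductive. The main obstacle is the crux of Step 2: proving that $\mathrm{rad}(\mathfrak g)$ is central, equivalently that the nilradical vanishes. Concretely this means ruling out the positive‑degree element $x$ above; using that $x$ is nilpotent, homogeneous of positive degree, $\phi$‑skew, and commutes with cup product by all of $H^2$, one is reduced — after fixing a Kähler class and passing to the Hard Lefschetz decomposition of $H^{*}(X)$ — to a descent argument on the grading. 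The remaining ingredients (the sign bookkeeping, Zariski density of the Lefschetz locus, and standard structure theory of algebraic and reductive Lie algebras) are routine.
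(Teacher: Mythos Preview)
The paper does not prove this proposition; it is only stated with a reference to Looijenga--Lunts. So the comparison is with the original argument in \cite{LL97:LooijengaLunts}.

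Your Step~1 (showing $\mathfrak g\subset\mathfrak{aut}(\phi)$) is essentially the same as theirs and is fine. Your Step~3 (reductivity of $\LLV_0$ as the centralizer of the semisimple element $h$) is also the standard deduction once semisimplicity is known.

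The gap is exactly where you flag it: Step~2. You reduce to showing that the nilradical has no homogeneous element $x$ of positive degree which commutes with all $e_a$, is $\phi$-skew, and acts nilpotently, and then assert that ``a descent argument on the grading'' finishes it. But there is no such purely algebraic descent available: nothing you have written down rules out a nilpotent, $H^2$-linear, $\phi$-skew endomorphism of positive degree on the part of $H^*(X)$ not generated by $H^2$. The constraints you list are simply not strong enough without further input, and the hoped-for descent is not there.

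What Looijenga--Lunts actually use is Hodge theory, which you never invoke for this step. Fix a K\"ahler class $\kappa$ and let $C$ be the Weil operator; the Hodge--Riemann bilinear relations say that $\psi(\alpha,\beta)\coloneqq\phi(\alpha,C\bar\beta)$ is a positive-definite Hermitian form. The corresponding Cartan involution $\theta\colon A\mapsto -A^{\dagger}$ of $\mathfrak{gl}(H^*(X,\mathbb C))$ preserves $\mathfrak g$: indeed $h^{\dagger}=h$, and for K\"ahler $a$ the identity $f_a=*^{-1}e_a*$ (which you mention but only as an aside) translates into $e_a^{\dagger}=f_a$; by rationality of $a\mapsto f_a$ and Zariski density this extends to all Lefschetz $a$. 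A $\theta$-stable Lie subalgebra is automatically reductive, and since $\mathfrak g$ is generated by the nilpotents $e_a,f_a\in[\mathfrak g,\mathfrak g]$ one gets $\mathfrak g=[\mathfrak g,\mathfrak g]$, hence semisimple. This polarization argument is the missing idea; once you insert it, your outline becomes a proof.

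A minor remark on your ``alternatively'' in Step~1: K\"ahler classes lie in $H^{1,1}(X,\mathbb R)$, which is not Zariski dense in $H^2(X,\mathbb C)$ when $h^{2,0}>0$, so that density argument needs the rationality of $a\mapsto f_a$ over the full Lefschetz locus, not just over the K\"ahler cone. Your primary argument via Jacobson--Morozov in the semisimple algebra $\mathfrak{aut}(\phi)$ avoids this issue.
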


\subsection{}
Now let $X$ be a compact hyperk\"ahler manifold of complex dimension $2n$. In this case, the Lie algebra $\LLV$ is also called the Looijenga-Lunts-Verbitsky Lie algebra. It is well known that for each hyperk\"ahler metric $g$ on $X$ we get an action of the quaternion algebra $\mathbb{H}$ on the real tangent bundle $TX$. This means that we have three complex structures $I,J,K$ such that
\begin{equation}\label{eq:quaternions}
    IJ=-JI=K.
\end{equation}
To each of these complex structures we can associate a K\"ahler form $\omega_I \coloneqq g(I(-),-), \omega_J \coloneqq (J(-),-),\omega_K \coloneqq g(K(-),-)$ and a holomorphic symplectic form $\sigma_I=\omega_J + i \omega_K, \sigma_J=\omega_K +i \omega_I, \sigma_K=\omega_I+i\omega_J$. 

\begin{definition}
The \textit{charateristic $3$-plane} $F(g)$ of the metric $g$ is \[ F(g) \coloneqq \langle [\omega_I],[\omega_J],[\omega_K] \rangle = \langle [\omega_I],[\Re \sigma_I],[\Im \sigma_I] \rangle \subset H^2(X,\mathbb R).\]
\end{definition}

\begin{definition}[{{\cite{Ver90:VerbitskySO(5)}}}]
Denote by $\mathfrak{g}_g \subset \mathrm{End}(H^*(X,\mathbb R))$ the Lie algebra generated by the $\mathfrak{sl}_2$-triples $(e_{a},h,f_{a})$ where $a \in F(g)$. 
\end{definition}

\begin{remark}
This Lie algebra is generated by the three $\mathfrak{sl}_2$-triples associated to the classes $[\omega_I],[\omega_J],[\omega_K]$. Indeed, from the discussion in the following section we will see that the subalgebra generated by these three $\mathfrak{sl}_2$-triples is semisimple. From the Jacobson-Morozov Theorem and the linearity of $e\colon H^2(X,\mathbb{R}) \rightarrow \End(H^*(X,\mathbb{R}))$ we conclude that it contains every $\mathfrak{sl}_2$-triple $(e_{a},h,f_{a})$ with $a \in F(g)$. 
\end{remark}

\section{The algebra associated to a metric}
\subsection{}
In this section we study the smaller algebra $\mathfrak{g}_g$ and its action on cohomology. These results are due to Verbitsky \cite{Ver90:VerbitskySO(5)}, see also \cite{GHJ03:CY_manifolds}.

We start with a general algebraic construction. Let $\mathbb{H}$ be the quaternion algebra. As a real vector space it is generated by $1,I,J,K$, where $I,J,K$ satisfy the relations \eqref{eq:quaternions}. We denote by $\mathbb{H}_0$ the pure quaternions, i.e. the linear combinations of $I,J,K$. 

Let $V$ be a left $\mathbb{H}$-module, equipped with an inner product
\[ \langle -,- \rangle \colon V \times V \rightarrow \mathbb{R},\] and assume that $I,J,K$ act on $V$ via isometries. The $\mathbb{H}$-action gives three complex structures $I,J,K$ on $V$, satisfying the relations \eqref{eq:quaternions}. Consider the forms
\[ \omega_I=\langle I(-),- \rangle,\]
\[\omega_J=\langle J(-),- \rangle, \] \[\omega_K=\langle K(-),- \rangle \] and the holomorphic symplectic forms $\sigma_I=\omega_J + i \omega_K, \sigma_J=\omega_K +i \omega_I, \sigma_K=\omega_I+i\omega_J$. 

\begin{remark}
Note that the operators $e_{\lambda}$ for $\lambda=\omega_I,\omega_J,\omega_K$ have the Lefschetz property; the dual operator is given by $f_{\lambda}=*^{-1} e_{\lambda} * $, where $*$ is the Hodge star operator on $\Lambda^{\bullet} V^*$ induced by the inner product.
\end{remark}

\begin{definition}
Let $\mathfrak{g}(V) \subset \mathrm{End}(\bigwedge^{\bullet} V^*)$ be the Lie algebra generated by the $\mathfrak{sl}_2$-triples 
\[ (e_{\lambda},h,f_{\lambda})_{\lambda=\omega_I,\omega_J,\omega_K}, \] where $h$ is the shifted degree operator.
\end{definition}

In particular, this definition makes sense for the rank one module $\mathbb{H}$ equipped with the standard inner product. This gives a Lie algebra $\mathfrak{g}(\mathbb{H}) \subset \mathrm{End}(\bigwedge^{\bullet} \mathbb{H}^*).$ We denote by $\mathfrak{g}(\mathbb{H})_0$ the degree-$0$ component of $\mathfrak{g}(\mathbb{H})$ (here the degree is viewed as an endomorphism of the graded vector space). It is a Lie subalgebra, and we denote it by $\mathfrak{g}(\mathbb{H})'_0 \coloneqq [\mathfrak{g}(\mathbb{H})_0,\mathfrak{g}(\mathbb{H})_0]$ its derived Lie algebra. 

\begin{thm}\label{valg}
With the above notation we have the following.
\begin{enumerate}
    \item There is a natural isomorphism $\mathfrak{g}(V) \cong \mathfrak{g}(\mathbb{H}).$
    \item There is an isomorphism $\mathfrak{g}(\mathbb{H}) \cong \mathfrak{so}(4,1)$.
    \item The algebra decomposes with respect to the degree as \[ \mathfrak{g}(\mathbb{H})=\mathfrak{g}(\mathbb{H})_{-2} \oplus \mathfrak{g}(\mathbb{H})_0 \oplus \mathfrak{g}(\mathbb{H})_{2}.\] 
    Furthermore, $\mathfrak{g}(\mathbb{H})_{\pm 2} \cong \mathbb{H}_0$ as Lie algebras, and $\mathfrak{g}(\mathbb{H})_0=\mathfrak{g}(\mathbb{H})'_0 \oplus \mathbb{R}h$ 
    with $\mathfrak{g}(\mathbb{H})'_0 \cong \mathbb{H}_0$; this last isomorphism is compatible with the actions on $\bigwedge ^{\bullet} V^*$. 
\end{enumerate}
\end{thm}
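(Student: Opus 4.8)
The plan is to reduce everything to an explicit computation on the rank-one module $\mathbb{H}$, and then transport the result to an arbitrary $V$ via a splitting.

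\medskip

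\noindent\textbf{Step 1: Reduction to the rank-one case (part (1)).} First I would observe that any left $\mathbb{H}$-module $V$ with a compatible inner product splits, as an $\mathbb{H}$-module with isometric action, into an orthogonal direct sum of copies of $\mathbb{H}$ (the standard classification of quaternionic Hermitian modules; one picks an orthonormal basis adapted to the $\mathbb{H}$-action, or equivalently diagonalizes). Hence $V \cong \mathbb{H}^{\oplus m}$ isometrically and $\mathbb{H}$-equivariantly. The point is that the three K\"ahler forms $\omega_I,\omega_J,\omega_K$ on $V$ are the orthogonal sums of the corresponding forms on each $\mathbb{H}$-factor, so the operators $e_{\omega_I},e_{\omega_J},e_{\omega_K}$ on $\bigwedge^\bullet V^* \cong \bigwedge^\bullet(\mathbb{H}^*)^{\otimes m}$ act ``diagonally'' in the sense of a derivation on a tensor product — this is the standard fact that wedging with a sum of forms coming from a direct sum decomposition is the sum of the wedgings on the factors (a K\"unneth-type statement). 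Therefore the Lie algebra they generate maps isomorphically onto its image, and this image is canonically identified with $\mathfrak{g}(\mathbb{H})$ acting diagonally; the fact that the map is injective uses that already on a single factor all three operators (and their Lefschetz duals $f_\lambda = *^{-1} e_\lambda *$) are faithfully represented. I expect the only subtlety here to be checking that $*^{-1} e_\lambda *$ on $V$ is the diagonal sum of the corresponding operators on the factors, which follows from the fact that $*$ is multiplicative under the tensor decomposition $\bigwedge^\bullet V^* \cong \bigotimes \bigwedge^\bullet(\mathbb{H}^*)$ up to sign conventions that one must track carefully. Granting this, part (1) is done.

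\medskip

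\noindent\textbf{Step 2: The degree decomposition and the brackets in degree $\pm 2$ (part (3)).} Now work with $V = \mathbb{H}$, so $\bigwedge^\bullet \mathbb{H}^*$ is $16$-dimensional, graded in degrees $0,1,2,3,4$, with the shifted degree operator $h$ taking values $-4,-2,0,2,4$. Each $e_\lambda$ has degree $+2$ and each $f_\lambda$ degree $-2$, so the generators lie in degrees $-2,0,2$. I would argue that the Lie algebra they generate stays within degrees $-2,0,2$: the key computation is that $[e_{\omega_I}, e_{\omega_J}]=0$ (wedging with two fixed $2$-forms commutes), hence the only brackets among the $e$'s and $f$'s that can produce something of degree $\pm 4$ vanish, and one checks $[e_{\omega_I}, f_{\omega_J}]$ has degree $0$ — so the algebra is generated in degrees $-2,0,2$ and closed under bracket forces the three-step grading. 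Next, the three elements $e_{\omega_I}, e_{\omega_J}, e_{\omega_K}$ span a $3$-dimensional abelian (since cup products of $2$-forms commute) space in degree $+2$; similarly the $f$'s in degree $-2$. To get $\mathfrak{g}(\mathbb{H})_{\pm 2} \cong \mathbb{H}_0$ as Lie algebras one must show these degree-$\pm2$ pieces are exactly $3$-dimensional and — since the ambient bracket of two degree-$2$ elements lands in degree $4$ which is zero — they are \emph{abelian} Lie algebras; here I suspect the intended statement is an isomorphism of the $3$-dimensional spaces carrying the adjoint $\mathfrak{g}(\mathbb{H})_0'$-action, matching the action of $\mathbb{H}_0 \cong \mathfrak{so}(3)$ on $\mathbb{H}_0$ by left multiplication / commutator. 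This structural matching is the part I'd be most careful about, and it is cleanest to obtain it as a byproduct of Step 3.

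\medskip

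\noindent\textbf{Step 3: Identifying $\mathfrak{g}(\mathbb{H})_0'$ with $\mathfrak{so}(3)$ and assembling $\mathfrak{so}(4,1)$ (parts (2) and (3)).} Compute the degree-$0$ brackets $[e_{\omega_I}, f_{\omega_J}]$, etc. Using $f_\lambda = *^{-1} e_\lambda *$ and the explicit action of $I,J,K$ one finds (this is the classical Verbitsky computation, also in \cite{GHJ03:CY_manifolds}) that the commutators of the $e$'s with the $f$'s of \emph{different} indices produce operators $L_{ij}$ that, together with $h$, span $\mathfrak{g}(\mathbb{H})_0$, and that the trace-free part closes into $\mathfrak{so}(3) \cong \mathfrak{su}(2)$ — concretely, the three operators corresponding to the rotations in the $(J,K)$, $(K,I)$, $(I,J)$ planes of $\mathbb{H}_0$. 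The diagonal $[e_{\omega_I}, f_{\omega_I}] = h$ (Jacobson--Morozov) gives the $\mathbb{R}h$ summand, and $[h, e_{\omega_I}] = 2 e_{\omega_I}$ shows $h$ is not in the derived algebra, yielding $\mathfrak{g}(\mathbb{H})_0 = \mathfrak{g}(\mathbb{H})_0' \oplus \mathbb{R}h$ with $\mathfrak{g}(\mathbb{H})_0' \cong \mathfrak{so}(3)$. For part (2): the whole algebra is $\mathfrak{g}(\mathbb{H})_{-2} \oplus \mathfrak{g}(\mathbb{H})_0 \oplus \mathfrak{g}(\mathbb{H})_2$ of dimension $3 + 4 + 3 = 10 = \dim \mathfrak{so}(5)$; it preserves the pairing $\phi$ (Proposition \ref{LL:ss} applies, as $\mathbb{H} = \bigwedge^\bullet T^*$ for a point is the cohomology model), which on the $16$-dimensional space has a definite signature one computes to be $(4,1)$-type after the sign twist; and a $10$-dimensional semisimple subalgebra of $\mathfrak{so}$ of a $5$-dimensional-ish quadratic space with a three-step grading whose $\pm 2$ pieces are $3$-dimensional abelian is forced to be $\mathfrak{so}(4,1)$ (equivalently, recognize the grading as the one induced by an $\mathfrak{sl}_2$ of ``height $2$'', whose centralizer-type data pins down the real form). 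I expect the genuine obstacle to be pinning down the real form $\mathfrak{so}(4,1)$ rather than $\mathfrak{so}(3,2)$ or the compact form: this requires actually evaluating the signature of $\phi$ restricted to $\bigwedge^\bullet \mathbb{H}^*$ with Verbitsky's sign convention, which I would do by diagonalizing $\phi$ on the explicit basis of wedge monomials, and then checking which degree-$2$ generator squares to $\pm h$ with which sign. The remaining compatibility claim in (3) — that the abstract isomorphism $\mathfrak{g}(\mathbb{H})_0' \cong \mathbb{H}_0$ is compatible with the $\bigwedge^\bullet V^*$-actions — then follows since, under the identifications of Step 1, the operators on $\bigwedge^\bullet V^*$ are the diagonal extensions of those on $\bigwedge^\bullet \mathbb{H}^*$, and the isomorphism was constructed through those very operators.
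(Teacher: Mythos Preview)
Your treatment of parts (1) and (3) matches the paper's: split $V$ as an orthogonal sum of copies of $\mathbb{H}$, use the K\"unneth tensor decomposition of $\bigwedge^\bullet V^*$ to reduce to the rank-one case, and then run Verbitsky's explicit bracket computation to produce the ten elements $h, e_I, e_J, e_K, f_I, f_J, f_K, K_{IJ}, K_{IK}, K_{JK}$ (with $K_{\lambda\mu}=[e_\lambda,f_\mu]$ acting as the Weil operator for the third complex structure) and read off the graded decomposition.

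Part (2) is where your argument has a real gap. You try to pin down the real form by invoking Proposition~\ref{LL:ss} and computing ``the signature of $\phi$'' on the $16$-dimensional space $\bigwedge^\bullet\mathbb{H}^*$. But Proposition~\ref{LL:ss} is stated for compact K\"ahler manifolds, not for this purely linear model, and more to the point a form on a $16$-dimensional space cannot have signature $(4,1)$: you never actually produce the $5$-dimensional quadratic space on which $\mathfrak{g}(\mathbb{H})$ acts orthogonally (your phrase ``$5$-dimensional-ish'' signals the problem). Your fallback of classifying the real form via the height-$2$ grading could in principle be made to work, but it is not carried out and distinguishing $\mathfrak{so}(4,1)$ from $\mathfrak{so}(3,2)$ this way is not automatic. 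The paper's key move, which you are missing, is to exhibit the $5$-dimensional space explicitly as the subrepresentation
\[
W=\bigwedge^0\mathbb{H}^*\oplus\langle\omega_I,\omega_J,\omega_K\rangle\oplus\bigwedge^4\mathbb{H}^*\subset\bigwedge^\bullet\mathbb{H}^*,
\]
with the quadratic form making $\bigwedge^0\oplus\bigwedge^4$ a hyperbolic plane orthogonal to the positive-definite $3$-plane --- so the signature is visibly $(4,1)$. One checks directly that the ten generators preserve this form, giving $\mathfrak{g}(\mathbb{H})\to\mathfrak{so}(W)\cong\mathfrak{so}(4,1)$; since the ten generators are linearly independent already on $W$ and (by the explicit bracket relations) span $\mathfrak{g}(\mathbb{H})$, the map is an isomorphism. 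This same argument also certifies that $\mathfrak{g}(\mathbb{H})_{\pm 2}$ is \emph{exactly} $3$-dimensional, something your Step~2 asserts but does not justify (a priori $[\mathfrak{g}_0,\mathfrak{g}_2]$ could produce new degree-$2$ elements outside the span of the $e_\lambda$).
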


\begin{proof}
$(1)$ Since $\langle -,- \rangle$ is $\mathbb{H}$-invariant, we can find an orthogonal decomposition 
\[ V = \mathbb{H} \oplus \dots \oplus \mathbb{H}. \]
Taking exterior powers we get $\bigwedge ^{\bullet} V^* = \bigwedge ^{\bullet} \mathbb{H}^* \otimes \dots \otimes \bigwedge ^{\bullet} \mathbb{H}^*$. This gives an injective map $\mathfrak{g}(\mathbb{H}) \rightarrow \End(\bigwedge ^{\bullet} V^*)$, given by the natural tensor product representation. It is a direct check that the image of this morphism is exactly the algebra $\mathfrak{g}(V)$. 

$(2)$ Consider the subrepresentation $W \subset \bigwedge ^{\bullet} \mathbb{H}^* $ given by
\[ W = \bigwedge ^{0} \mathbb{H}^* \oplus \langle \omega_I,\omega_J,\omega_K \rangle \oplus \bigwedge^{4}\mathbb{H}^*. \]
We equip it with the quadratic form given by setting $\bigwedge ^{0} \mathbb{H}^* \oplus \bigwedge^{4}\mathbb{H}^*$ to be a hyperbolic plane, orthogonal to the $3$-plane, and $\{\omega_I,\omega_J,\omega_K\}$ to be an orthonormal basis of the $3$-plane. By a direct computation we can see that the action of $\mathfrak{g}(\mathbb{H})$ respects infinitesimally this quadratic form. This gives a map 
\begin{equation}\label{eq:isomSO}
    \mathfrak{g}(\mathbb{H}) \rightarrow \mathfrak{so}(W) \cong \mathfrak{so}(4,1),
\end{equation} 
that we next show to be an isomorphism. 

Since $W $ has dimension $5$ the Lie algebra $\mathfrak{so}(W)$ has dimension $10$. Now consider the following $10$ elements of $\mathfrak{g}(\mathbb{H})$: 
\[ h,e_I,e_J,e_K,f_I,f_J,f_K,K_{IJ},K_{IK},K_{JK}, \]
where $K_{IJ} \coloneqq [e_I,f_J], K_{IK}=[e_I,f_K]$ and $K_{JK}=[e_J,f_K]$. Verbitsky \cite{Ver90:VerbitskySO(5)} showed that $K_{IJ}$ acts like the Weil operator associated with the Hodge structure on $\bigwedge^{\bullet}\mathbb{H}^*$ given by $K$, and similarly $K_{JK}$ and $K_{IK}$. This means that on a $(p,q)$ form with respect to $K$ it acts as multiplication by $i(p-q)$. It follows that the ten operators above are linearly independent over $W$, hence the map is surjective. Moreover they generate $\mathfrak{g}(\mathbb{H})$ as a vector space. Indeed, they generate $\mathfrak{g}(\mathbb{H})$ as a Lie algebra, and one has the following relations (see \cite{Ver90:VerbitskySO(5)}): 
\begin{align*}
    [K_{\lambda,\mu},K_{\mu,\nu}]=K_{\lambda,\nu}, & \quad [K_{\lambda,\mu},h] = 0, \\
    [K_{\lambda,\mu},e_{\mu}]=2e_{\lambda},  & \quad [K_{\lambda,\mu},f_{\mu}]=2f_{\lambda}, \\
    [K_{\lambda,\mu},e_{\nu}]=0,  & \quad [K_{\lambda,\mu},f_{\nu}]=0,
\end{align*}
where $\lambda,\mu,\nu \in \{I,J,K\}$ and $\nu \neq \lambda, \nu \neq \mu$. This implies that they are a basis of $\mathfrak{g}(\mathbb{H})$, hence the map \eqref{eq:isomSO} is an isomorphism.

Point $(3)$ follows using this explicit basis. Indeed we have 
\begin{align*}
\mathfrak{g}(\mathbb{H})_{-2} &= \langle f_I,f_J,f_K \rangle, \quad
\mathfrak{g}(\mathbb{H})_{2} = \langle e_I,e_J,e_K \rangle, \  \textrm{and} \\
\mathfrak{g}(\mathbb{H})_{0} & = \langle K_{IJ},K_{JK},K_{IK} \rangle \oplus \mathbb{R}h. \\
\end{align*}
In particular we have 
\begin{align*}
    \mathfrak{g}(\mathbb{H})'_0 & \xrightarrow{\sim} \mathbb{H}_0, \\
    K_{IJ} & \mapsto K,\\
    K_{JK} & \mapsto I,\\
    K_{IK} & \mapsto J.
 \end{align*}
Since $I,J,K \in \mathbb{H}_0$ act on $\bigwedge^{\bullet}\mathbb{H}^*$ as Weil operators for the corresponding complex structures on $\mathbb{H}$, the isomorphism is compatible with the actions.
\end{proof}

Now we can compute the Lie algebra $\mathfrak{g}_g$. As above we denote by $(\mathfrak{g}_g)_0$ the degree-$0$ part, and by $(\mathfrak{g}_g)'_0 \coloneqq [(\mathfrak{g}_g)_0,(\mathfrak{g}_g)_0]$ its derived Lie algebra.

\begin{prop}
Let $(X,g)$ be a hyperk\"ahler manifold with a fixed hyperk\"ahler metric. 
\begin{enumerate}
    \item There is a natural isomorphism of graded Lie algebras $\mathfrak{g}_g \cong \mathfrak{g}(\mathbb{H})$. In particular $\mathfrak{g}_g \cong \mathfrak{so}(4,1)$.
    \item The semisimple part $(\mathfrak{g}_g)'_0$ acts on $H^*(X,\mathbb R)$ via derivations. 
\end{enumerate}
\end{prop}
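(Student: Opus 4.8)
The plan for (1) is to reduce to Theorem \ref{valg} by a pointwise argument and then descend to cohomology by Hodge theory. Observe first that the operators $e_{\omega_\lambda}$, $f_{\omega_\lambda}=*^{-1}e_{\omega_\lambda}*$ and $h$ (for $\lambda=I,J,K$), viewed as endomorphisms of differential forms, are $C^\infty(X)$-linear, hence are sections of $\mathrm{End}(\bigwedge^\bullet T^*X)$; so is every iterated bracket. Let $\mathfrak{a}\subset\Gamma\big(X,\mathrm{End}(\bigwedge^\bullet T^*X)\big)$ be the Lie algebra they generate under the pointwise bracket. For $x\in X$ the fibre $T_xX$ is a left $\mathbb{H}$-module on which $I,J,K$ act by $g_x$-isometries, with $\omega_\lambda(x)=g_x(\lambda(-),-)$, so evaluation carries $\mathfrak{a}$ onto the Lie algebra $\mathfrak{g}(T_xX)\cong\mathfrak{g}(\mathbb{H})$ of Theorem \ref{valg}. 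The structure relations in the proof of that theorem hold at every point, hence for sections, so $\mathfrak{a}$ is spanned over $\mathbb{R}$ by the ten sections $h,e_{\omega_\lambda},f_{\omega_\lambda},K_{\lambda\mu}$; these are linearly independent because their values at $x$ are. Thus $\mathrm{ev}_x\colon\mathfrak{a}\rightarrow\mathfrak{g}(T_xX)$ is an isomorphism of graded Lie algebras, and $\mathfrak{a}\cong\mathfrak{g}(\mathbb{H})\cong\mathfrak{so}(4,1)$.

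To pass to cohomology, recall that $\omega_I,\omega_J,\omega_K$ are parallel for the Levi-Civita connection of $g$, so $e_{\omega_\lambda}$ commutes with the Laplacian $\Delta_g$; the same holds for $*$ and $h$, so every element of $\mathfrak{a}$ preserves the finite-dimensional space of $g$-harmonic forms, which represents $H^*(X,\mathbb{R})$. Restriction to harmonic forms is then a morphism of graded Lie algebras $r\colon\mathfrak{a}\rightarrow\mathrm{End}\big(H^*(X,\mathbb{R})\big)$, with image $\mathfrak{g}_g$ by construction. Its kernel is an ideal of the simple Lie algebra $\mathfrak{a}\cong\mathfrak{so}(4,1)$, and it is proper because $h$ acts nontrivially on $H^*(X,\mathbb{R})$ (the cohomology is not concentrated in its middle degree). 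Hence $r$ is injective and $\mathfrak{g}_g\cong\mathfrak{a}\cong\mathfrak{g}(\mathbb{H})\cong\mathfrak{so}(4,1)$ as graded Lie algebras, which proves (1).

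For (2), by (1) and Theorem \ref{valg}(3) the space $(\mathfrak{g}_g)'_0$ is spanned by $K_{IJ},K_{JK},K_{IK}$, and transporting Verbitsky's computation through $r$ shows that these act on $H^*(X,\mathbb{C})$ as the Weil operators of the complex structures $K,I,J$: for instance $K_{IJ}$ is multiplication by $i(p-q)$ on the summand $H^{p,q}_K$ of the Hodge decomposition attached to $K$. Fix now $L\in\{I,J,K\}$. The $(p,q)$-decomposition of $H^*(X,\mathbb{C})$ for $L$ is a grading of the cup-product ring, so the one-parameter family $\rho_L(\theta)$ acting on $H^{p,q}_L$ by $e^{i\theta(p-q)}$ consists of ring automorphisms and preserves the real structure $H^*(X,\mathbb{R})$; differentiating $\rho_L(\theta)(\alpha.\beta)=\rho_L(\theta)(\alpha).\rho_L(\theta)(\beta)$ at $\theta=0$ shows that its infinitesimal generator, which acts on $H^{p,q}_L$ by $i(p-q)$, is a derivation of the cup product on $H^*(X,\mathbb{R})$. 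As this generator is precisely the relevant $K_{\lambda\mu}$, the algebra $(\mathfrak{g}_g)'_0$ acts by derivations.

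The main obstacle is (2). It is tempting to argue that each $K_{\lambda\mu}$ is a derivation of $\big(\Omega^\bullet(X),\wedge\big)$ preserving harmonic forms, hence a derivation of $H^*(X,\mathbb{R})$, but this fails: the harmonic forms are not a subring, and $K_{\lambda\mu}$ does not even commute with $d$ (e.g.\ $[K_{IJ},d]$ is a nonzero multiple of $d^c$ for $K$). One must instead use that the Hodge decomposition itself is an algebra grading, which exhibits $K_{\lambda\mu}$ as the generator of a circle of ring automorphisms of the cohomology ring. In (1), the only slightly delicate points are the $C^\infty(X)$-linearity and evaluation reduction, and the use of simplicity of $\mathfrak{so}(4,1)$ to obtain injectivity on cohomology.
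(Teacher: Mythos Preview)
Your argument is correct and follows essentially the same route as the paper: a pointwise identification of the form-level algebra with $\mathfrak{g}(\mathbb{H})$ via Theorem \ref{valg}, descent to harmonic forms, and identification of $(\mathfrak{g}_g)'_0$ with the span of the Weil operators for $I,J,K$, whose derivation property comes from the multiplicativity of the Hodge grading. The only substantive variation is in establishing injectivity of the restriction to cohomology: the paper shows $\dim\mathfrak{g}_g\geq 10$ by checking directly that the ten operators remain linearly independent on $H^*(X,\mathbb{R})$, whereas you invoke simplicity of $\mathfrak{so}(4,1)$ to conclude that the kernel, being a proper ideal, must vanish---a slightly cleaner alternative.
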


\begin{proof}
$(1)$. Consider the Lie subalgebra $\hat{\mathfrak{g}}_{g} \subset \End(\Omega^{\bullet}_X)$, generated by the $\mathfrak{sl_2}$-triples $(e_a,h,f_a)$ with $a \in F(g)$, at the level of forms (in particular $f_a=*^{-1}e_a*$). From the previous proposition, we see that for every point $x \in X$ there is an inclusion $\mathfrak{g}(\mathbb{H})\  \hookrightarrow \End(\Omega^{\bullet}_{X,x})$. This gives an inclusion 
$\mathfrak{g}(\mathbb{H})\  \hookrightarrow \prod_{x \in X} \End(\Omega^{\bullet}_{X,x}) $. It follows from the definitions that the two algebras $\mathfrak{g}(\mathbb{H})$ and $\hat{\mathfrak{g}}_{g}$ are equal as subalgebras of $\prod_{x \in X} \End(\Omega^{\bullet}_{X,x})$.

Since the metric $g$ is fixed, the $\mathfrak{sl}_2$-triples $(e_a,h,f_a)$ preserve the harmonic forms $\mathcal{H}^*(X)$, and so does $\hat{\mathfrak{g}}_{g}$. Since $\mathcal{H}^*(X) \cong H^*(X,\mathbb{R})$ we get a morphism 
\[ \mathfrak{g}(\mathbb{H}) \cong \hat{\mathfrak{g}}_{g} \rightarrow \mathfrak{g}_g. \]
This map is surjective, because the image contains the $\mathfrak{sl}_2$-triples that generate $\mathfrak{g}_g$. Moreover, by explicit computations similar to the proof of the previous proposition, we can see that $\dim \mathfrak{g}_g \geq 10 $. Hence the map is an isomorphism. 

$(2)$. From the previous proposition we have an isomorphism compatible with the actions on cohomology
\[(\mathfrak{g}_g)'_0 \cong \mathfrak{g}(\mathbb{H})'_0 \cong \mathbb{H}_0.\]
Hence, it suffices to prove the statement for the action of $I,J,K$. Each of them gives a complex structure, and acts as the Weil operator on the associated Hodge decomposition. So, the action on $(p,q)$ forms is given by multiplication by $i(p-q)$, which is a derivation. 
\end{proof}

\section{The total Lie algebra}
The goal of this section is to prove the following result due to Looijenga and Lunts \cite[Proposition $4.5$]{LL97:LooijengaLunts} and Verbitsky \cite[Theorem $1.6$]{Verbitsky:thesis}.

\begin{thm}\label{thm:LL}
Let $X$ be a hyperk\"ahler manifold. With the above notation we have the following.
\begin{enumerate}
    \item The total Lie algebra $\LLV$ lives only in degrees $-2,0,2$, so it decomposes as: 
    \[ \LLV=\LLV_{-2} \oplus \LLV_0 \oplus \LLV_2. \]
    \item There are canonical isomorphisms $\LLV_{\pm 2} \cong H^2(X,\mathbb{Q})$.
    \item There is a decomposition $\LLV_0 = \LLV_0' \oplus \mathbb{Q}h$ with $\LLV_0' \cong \mathfrak{so}(H^2(X,\mathbb{Q}),q)$, where $q$ is the Beauville--Bogolomov--Fujiki quadratic form \cite{notes:BD}. Furthermore $\LLV_0'$ acts on $H^*(X,\mathbb{Q})$ by derivations.
\end{enumerate}
\end{thm}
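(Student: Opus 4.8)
The plan is to bootstrap from the metric case (Theorem \ref{valg} and the preceding Proposition) to the full algebra by varying the hyperk\"ahler metric. First I would set up the key local input: for a fixed metric $g$, the algebra $\mathfrak{g}_g$ sits inside $\LLV$, and by the previous results it already gives us three graded pieces $\mathfrak{g}_{g,-2}, \mathfrak{g}_{g,0}, \mathfrak{g}_{g,2}$ with $\mathfrak{g}_{g,\pm 2} \cong F(g) \subset H^2(X,\mathbb{Q})$ (the span of $[\omega_I],[\omega_J],[\omega_K]$) via $a \mapsto e_a$ and $a \mapsto f_a$. The central observation is that as $g$ ranges over all hyperk\"ahler metrics compatible with deformations of the complex structure, the characteristic $3$-planes $F(g)$ sweep out all of $H^2(X,\mathbb{R})$ — indeed any positive class is a K\"ahler class for some such structure, and any positive $3$-plane arises as some $F(g)$ (this is the Calabi--Yau theorem / Verbitsky's description of the period domain). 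Hence the operators $e_a$ for \emph{all} $a \in H^2$ with the Lefschetz property, together with their duals $f_a$, all lie in $\LLV$, and $e\colon H^2(X,\mathbb{Q}) \to \End_2(H^*)$ is linear, so $\LLV_2 \supseteq e(H^2)$ and similarly $\LLV_{-2} \supseteq f(\text{(open set)})$, which spans.

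Next I would prove the grading statement (1). The generators $e_a, f_a$ and $h$ lie in degrees $2, -2, 0$. To see that the Lie algebra they generate stays in degrees $-2, 0, 2$, I would argue that it suffices to show $[e_a, e_b] = 0$ and $[f_a, f_b] = 0$ for all $a, b \in H^2$, and that $[e_a, f_b] \in \End_0$ is forced to lie in the span of the $\mathfrak{g}_g$-pieces. The vanishing $[e_a,e_b]=0$ is clear since cup product is commutative. For the rest, cover $a, b$ by a single characteristic $3$-plane $F(g)$ (possible by the sweeping argument, at least for $a, b$ in a dense set, then extend by linearity/continuity and rationality); then $[e_a, f_b] \in \mathfrak{g}_g \subset \End_0$, and more importantly brackets of degree-$0$ elements with $e_c$'s land back in degree $2$. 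Concretely: $\LLV_0$ is generated by $h$ and the $K_{ab} := [e_a, f_b]$, and the relations in the proof of Theorem \ref{valg} (which hold inside each $\mathfrak{g}_g$, hence globally by the sweeping argument) show $[\LLV_0, \LLV_{\pm 2}] \subseteq \LLV_{\pm 2}$ and $[\LLV_2, \LLV_{-2}] \subseteq \LLV_0$, so no new degrees appear.

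Then (2) follows: $\LLV_2 = e(H^2(X,\mathbb{Q}))$ with $e$ injective (apply to the top class, or use that $e_a = 0$ forces $a = 0$ by Poincar\'e duality), so $\LLV_2 \cong H^2(X,\mathbb{Q})$ canonically, and likewise $\LLV_{-2} \cong H^2(X,\mathbb{Q})$ via $f$ (which is a priori only rational on an open set, but its image spans and the identification is pinned down by Jacobson--Morozov uniqueness). For (3), I would identify $\LLV_0' = [\LLV_0, \LLV_0]$ with $\mathfrak{so}(H^2, q)$ as follows. The bracket $[\cdot,\cdot]\colon \LLV_2 \otimes \LLV_{-2} \to \LLV_0$ gives, for each $a, b \in H^2$, an operator $K_{ab}$; using the $\mathfrak{g}_g$-computation one checks $K_{ab}$ acts on $\LLV_2 \cong H^2$ as $x \mapsto q(a,x) b - q(b, x) a$ (up to a universal scalar), i.e.\ as the elementary skew endomorphism $a \wedge b$ with respect to $q$. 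Since such elements span $\mathfrak{so}(H^2, q)$, this produces a surjection $\LLV_0' \to \mathfrak{so}(H^2, q)$; injectivity follows because $\LLV$ is semisimple (Proposition \ref{LL:ss}) and a dimension count — or the faithfulness of the action on $\LLV_2$ — forces it to be an isomorphism. Finally, $\LLV_0 = \LLV_0' \oplus \mathbb{Q}h$ because $h$ is central in $\LLV_0$ (it commutes with each $K_{ab}$ by the relations above) and not in the derived algebra. That $\LLV_0'$ acts by derivations on $H^*(X,\mathbb{Q})$ I would deduce from the metric case: $\LLV_0'$ is generated by the $K_{ab}$, and for $a, b$ in a common $F(g)$ the operator $K_{ab}$ lies in $(\mathfrak{g}_g)_0'$, which acts by derivations by part (2) of the previous Proposition; the derivation property is a closed linear condition, so it passes to the whole span.

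The main obstacle I anticipate is making the "sweeping" step rigorous: one must know that every (suitably positive) class and every positive $3$-plane in $H^2(X,\mathbb{R})$ is realized as a K\"ahler class, resp.\ characteristic $3$-plane, of some hyperk\"ahler metric on \emph{some} complex deformation of $X$ — and then argue that the resulting operators $e_a, f_a \in \End(H^*(X,\mathbb{R}))$ are literally the same under the parallel transport identifications, so that they all live in one fixed $\LLV$. This requires the global Torelli package and the fact that $\LLV$ is a deformation invariant (it depends only on the ring structure of $H^*$ and the class of $h$, which are constant in families). Once this is in place, everything else is the linear algebra of $\mathfrak{so}(n)$ acting on its standard representation together with the explicit relations already recorded in Theorem \ref{valg}.
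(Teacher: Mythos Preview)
Your strategy is the paper's: bootstrap from the metric algebras $\mathfrak{g}_g$ and extend by varying $g$. Two points of divergence are worth flagging.

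First, your anticipated obstacle is not one. The paper never invokes global Torelli or the claim that \emph{every} positive $3$-plane is characteristic; it uses only that the set of characteristic $3$-planes is \emph{open} in the Grassmannian (Proposition~\ref{prop:3planes_open}, a consequence of Yau's theorem). Openness is all that is needed for the Zariski-closure arguments: e.g.\ for $[f_a,f_b]=0$ (Lemma~\ref{lem:abelian}) one fixes $a$, notes that $\{b : a,b\in F(g)\text{ for some }g\}$ is open, uses $f=*^{-1}e*$ there, and extends by rationality of $b\mapsto f_b$. No parallel transport or deformation-invariance argument is required beyond the fact that nearby complex structures share the same underlying smooth manifold and hence the same $H^*(X,\mathbb{R})$.

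Second, your route to $[V_0,V_{-2}]\subset V_{-2}$ via the explicit $\mathfrak{g}_g$-relations plus density is workable but awkward: nested brackets would require three or more classes to sit in a single $3$-plane, and one must then induct via Jacobi. The paper's argument is cleaner. It first proves (the Claim) that $V_0'$ acts by derivations, which immediately yields $[u,e_a]=e_{u(a)}$ and hence $[V_0,V_2]\subset V_2$. For the $-2$ side it integrates to the Lie group $G_0'$, observes $te_at^{-1}=e_{t(a)}$ and $tht^{-1}=h$, and deduces $tf_at^{-1}=f_{t(a)}$ from the \emph{uniqueness} in Jacobson--Morozov. This sidesteps the non-linearity of $b\mapsto f_b$ entirely.

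One small correction: your formula ``$K_{ab}$ acts on $H^2$ as $a\wedge b$'' cannot hold as written, since $K_{aa}=[e_a,f_a]=h$ acts as a nonzero scalar while $a\wedge a=0$; the statement is only true modulo $\mathbb{R}h$. The paper defines the map $\LLV_0'\to\mathfrak{so}(H^2,q)$ by restricting the cohomology action to $H^2$ and checking $q$-invariance via the Weil operators, then proves surjectivity (the subalgebras $\mathfrak{so}(F(g),q|_{F(g)})$ generate as $g$ varies) and injectivity via Lemma~\ref{lem:injectivity} combined with the derivation property---essentially your ``faithfulness on $\LLV_2$'' idea, made precise.
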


The main geometric input in the proof is the following lemma.

\begin{lem}\label{lem:abelian}
If $X$ is a compact hyperk\"ahler manifold, then $[f_a,f_b]=0$ for every $a,b \in H^2(X,\mathbb{R})$ for which $f$ is defined. 
\end{lem}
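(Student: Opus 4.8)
The plan is to exhibit $[f_a,f_b]$ as a rational function of the pair $(a,b)$ which vanishes on a Zariski-dense set, the dense set coming from pairs of classes that lie in a common characteristic $3$-plane. To set this up, I would first note that, by the rationality of $a\mapsto f_a$ recorded above, the assignment
\[
(a,b)\ \longmapsto\ [f_a,f_b]\ \in\ \End_{-4}\bigl(H^*(X,\mathbb{R})\bigr)
\]
is a rational map on $H^2(X,\mathbb{R})\times H^2(X,\mathbb{R})$, regular on the Zariski-open dense locus $U$ where both $e_a$ and $e_b$ have the Lefschetz property. Since $H^2(X,\mathbb{R})\times H^2(X,\mathbb{R})$ is irreducible, it then suffices to prove that $[f_a,f_b]=0$ on some Zariski-dense subset of $U$.

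The model case will be that of two classes $a,b$ lying in the characteristic $3$-plane $F(g)$ of a \HK{} metric $g$ — possibly on a complex manifold diffeomorphic to $X$ rather than on $X$ itself, which does no harm since the Lefschetz property of $e_a$ and the operator $f_a$ depend only on the class $a\in H^2$ and the cup product, not on the complex structure. For such $a$ (with the Lefschetz property), the remark following the definition of $\mathfrak{g}_g$ shows that the whole $\mathfrak{sl}_2$-triple $(e_a,h,f_a)$ lies in $\mathfrak{g}_g$; in particular $f_a,f_b\in(\mathfrak{g}_g)_{-2}$. As shown in the previous section $\mathfrak{g}_g\cong\mathfrak{g}(\mathbb{H})\cong\mathfrak{so}(4,1)$ as graded Lie algebras, and by Theorem~\ref{valg} this algebra is concentrated in degrees $-2,0,2$; hence $[f_a,f_b]\in(\mathfrak{g}_g)_{-4}=0$.

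It remains to see that pairs landing in a common characteristic $3$-plane form a Zariski-dense subset of $U$. Here I would prove that every $q$-positive-definite $2$-plane $P\subset H^2(X,\mathbb{R})$ (with $q$ the Beauville--Bogomolov--Fujiki form) is contained in $F(g)$ for some \HK{} metric $g$: choosing a $q$-orthogonal basis $u,v$ of $P$ with $q(u)=q(v)$ (possible since $q|_P$ is positive-definite), the class $\sigma=u+iv$ satisfies $q(\sigma)=0$ and $q(\sigma,\bar\sigma)>0$, so $[\sigma]$ lies in the period domain; by the surjectivity of the period map there is a complex structure on a manifold diffeomorphic to $X$ with period $[\sigma]$, and by Yau's theorem any of its K\"ahler classes contains a \HK{} metric $g$, whose characteristic $3$-plane is then $F(g)=P\oplus(\text{K\"ahler line})\supseteq P$. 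Now the set of pairs $(a,b)$ spanning a $q$-positive-definite $2$-plane is a nonempty Euclidean-open, hence Zariski-dense, subset of $H^2(X,\mathbb{R})^2$ (nonempty because $q$ has signature $(3,b_2-3)$); for such a pair every nonzero vector of $\langle a,b\rangle$ has positive $q$-square and therefore is a positive multiple of a K\"ahler class in the twistor family of $g$, so $(a,b)\in U$, and $[f_a,f_b]=0$ by the model case. Consequently $[f_a,f_b]$ vanishes on a Zariski-dense subset of $U$, hence identically on $U$, which is the assertion.

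The main obstacle is the density step, which rests on two genuinely global inputs: the surjectivity of the period map for \HK{} manifolds (used to realize an arbitrary positive $2$-plane as a period plane) and Yau's solution of the Calabi conjecture (used to upgrade the resulting complex structure to a \HK{} metric, thereby extending the $2$-plane to a characteristic $3$-plane). A local surjectivity statement (local Torelli together with unobstructedness of deformations) combined with Yau would already suffice, since it produces a Euclidean-open, hence Zariski-dense, family of period planes. A more routine point still deserving care is that the Jacobson--Morozov dual $f_a$ of $e_a$ lies in $\mathfrak{g}_g$ for a \emph{general} Lefschetz class $a\in F(g)$, not merely for $a\in\{\omega_I,\omega_J,\omega_K\}$: this is precisely the content of the remark following the definition of $\mathfrak{g}_g$, and it uses the linearity of $e\colon H^2(X,\mathbb{R})\to\End(H^*(X,\mathbb{R}))$ together with the uniqueness clause of Theorem~\ref{thm:JM}.
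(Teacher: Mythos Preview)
Your proof is correct and follows the same overall architecture as the paper: exhibit $[f_a,f_b]$ as a rational function of $(a,b)$, check it vanishes whenever $a,b$ lie in a common characteristic $3$-plane, and conclude by Zariski density. The execution of the two sub-steps, however, differs from the paper in ways worth noting.

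For the model case, the paper argues in one line at the level of forms: for $a,b\in F(g)$ one has $f_a=\ast^{-1}e_a\ast$ with the Hodge star depending only on $g$, so $[f_a,f_b]=\ast^{-1}[e_a,e_b]\ast=0$ by commutativity of the wedge product. Your route via $(\mathfrak{g}_g)_{-4}=0$ is valid and non-circular (Theorem~\ref{valg} and the structure of $\mathfrak{g}_g$ are proved independently of Lemma~\ref{lem:abelian}), but it invokes more machinery than necessary.

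For the density step, the paper fixes $a$ and appeals to Proposition~\ref{prop:3planes_open} (openness of characteristic $3$-planes in the Grassmannian, itself a consequence of local Torelli and Yau) to get an open set of admissible $b$'s. You instead show directly that \emph{every} $q$-positive $2$-plane sits in some $F(g)$, using surjectivity of the period map together with Yau; this is a stronger input than strictly needed (as you correctly observe, local Torelli already yields a Euclidean-open family of realizable $2$-planes, which is all the Zariski-density argument requires), but it makes the argument pleasantly self-contained and sidesteps the mild sloppiness in the paper's phrasing, where for a fixed $a$ with $q(a)\le 0$ the displayed set of $b$'s is actually empty. Your remark that the relevant operators depend only on the cup product and not on the complex structure is exactly what licenses passing to deformations of $X$.
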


The proof relies on the following fact. 

\begin{prop}\label{prop:3planes_open}
The set of charateristic $3$-planes is open in the Grassmannian of $3$-planes in $H^2(X,\mathbb{R})$. 
\end{prop}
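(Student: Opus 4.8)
The plan is to describe the set of characteristic $3$-planes explicitly in terms of the Beauville--Bogomolov--Fujiki form and to exhibit it as the image of an open map. Recall that for a metric $g$ with characteristic $3$-plane $F(g) = \langle [\omega_I], [\omega_J], [\omega_K]\rangle$, the Fujiki relations (applied to the classes $\sigma_I = \omega_J + i\omega_K$, etc., which are of type $(2,0)$ for $I$) force the restriction of $q$ to $F(g)$ to be positive definite, and in fact the $[\omega_I],[\omega_J],[\omega_K]$ can be scaled to an orthonormal basis. Conversely, I would invoke the surjectivity of the period map / the description of the Kähler cone together with Yau's theorem: given any $3$-plane $P$ on which $q$ is positive definite and which satisfies the necessary positivity constraints, a suitable choice of complex structure and Calabi--Yau metric realizes $P$ as $F(g)$. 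Thus the characteristic $3$-planes are exactly the $q$-positive-definite $3$-planes $P$ such that $P$ meets the positive cone in the expected way — and this is manifestly an open condition.

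More concretely, first I would fix one metric $g_0$ and its characteristic plane $F_0$. Since being positive definite is an open condition on the Grassmannian, there is an open neighborhood $U$ of $F_0$ consisting of $q$-positive $3$-planes. Next I would show that every $P \in U$ (possibly after shrinking $U$) is again characteristic. For this I would use the local Torelli theorem: deforming the complex structure of $X$ moves the period point, and as we vary over all complex structures in a fixed deformation class together with all Kähler classes, the span $\langle [\omega_I],[\Re\sigma_I],[\Im\sigma_I]\rangle$ sweeps out an open set of $3$-planes. The key point is that the map sending (complex structure, Kähler class) to the characteristic plane is a submersion onto its image near $F_0$, which follows from computing its differential against the unobstructedness of hyperkähler deformations (Bogomolov--Tian--Todorov) and the openness of the Kähler cone.

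The main obstacle I anticipate is the converse direction: producing, for a given nearby $q$-positive $3$-plane $P$, an actual hyperkähler metric whose characteristic plane is $P$. This requires combining the surjectivity of the period map (which gives a complex structure with the right period line inside $P$) with Yau's solution of the Calabi conjecture (which gives the Ricci-flat Kähler metric for any chosen Kähler class) and then checking that the resulting $\langle [\omega_I],[\omega_J],[\omega_K]\rangle$ is precisely $P$ rather than some other plane — one must match not just the period line but the full $3$-plane, which pins down the Kähler class up to scale. Handling the global issues in the period map (non-Hausdorff moduli, the birational ambiguity) is the delicate part, though for a purely local openness statement one can work in a small analytic neighborhood and these pathologies do not intervene. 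Once the set-theoretic description "characteristic $\iff$ $q$-positive definite and positioned correctly relative to the positive cone" is in hand, openness in the Grassmannian is immediate.
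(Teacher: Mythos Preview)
Your approach is correct and aligns with the paper, which does not give a detailed proof but simply records that the proposition follows from Yau's theorem. You have correctly supplied the standard additional ingredients---local Torelli to realize a nearby period line by an actual complex structure on $X$, and openness of the K\"ahler cone to guarantee that the $q$-orthogonal class in the $3$-plane is K\"ahler for that structure---after which Yau produces the hyperk\"ahler metric with the desired characteristic plane; your observation that only a local argument is needed, so the global pathologies of the period map do not intervene, is exactly right.
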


In turn, this follows from a celebrated Theorem of Yau.

\begin{thm}[Yau]
Let $X$ be a hyperk\"ahler manifold, and let $I$ be a complex structure on $X$. If $\omega$ is a K\"ahler class, then there is a unique hyperk\"ahler metric $g$ such that 
$[\omega_I]=\omega.$
\end{thm}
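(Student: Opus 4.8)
The plan is to deduce the statement from Yau's solution of the Calabi conjecture together with a holonomy reduction argument. First I would note that $(X,I)$ is a compact holomorphically symplectic K\"ahler manifold: it carries a nowhere-degenerate holomorphic two-form $\sigma$, so that $\sigma^{\wedge n}$ trivialises the canonical bundle $K_{(X,I)}=\Omega^{2n}_{(X,I)}$ and in particular the first Chern class vanishes. By Yau's theorem there is then a \emph{unique} K\"ahler metric $g$ on $(X,I)$ whose K\"ahler form represents the given class $\omega$ and whose Ricci curvature vanishes identically.

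Next I would upgrade this Ricci-flat K\"ahler metric to a hyperk\"ahler one. By Hodge theory the holomorphic form $\sigma$ is harmonic of type $(2,0)$, and on a compact Ricci-flat K\"ahler manifold the Bochner technique (the relevant Weitzenb\"ock identity) forces every holomorphic form to be parallel for the Levi-Civita connection $\nabla$; hence $\nabla\sigma=0$. A parallel, everywhere non-degenerate $(2,0)$-form, together with the (automatically parallel) K\"ahler form, reduces the holonomy group $\mathrm{Hol}(g)$ to $\mathrm{Sp}(n)$. Thus $g$ is a hyperk\"ahler metric, $I$ is one of the complex structures of its quaternionic triple, and $\omega_I=g(I(-),-)$ is the K\"ahler form of $(X,I,g)$; by construction $[\omega_I]=\omega$.

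For uniqueness I would argue that any hyperk\"ahler metric $g'$ on $X$ with $[\omega_{I}']=\omega$ is, with respect to the complex structure $I$, a Ricci-flat K\"ahler metric (a hyperk\"ahler metric is Ricci-flat and K\"ahler for each of its complex structures) whose K\"ahler form lies in the class $\omega$; the uniqueness clause in Yau's theorem then gives $g'=g$.

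The main obstacle is the existence statement itself: it rests on solving the complex Monge--Amp\`ere equation underlying the Calabi conjecture, which I would simply invoke. Beyond citing known results, the only step requiring care is the holonomy reduction -- in particular the implication ``$\sigma$ holomorphic $\Rightarrow$ $\sigma$ parallel'', which genuinely uses compactness and Ricci-flatness -- but this is the classical Bochner principle for compact Ricci-flat K\"ahler manifolds, and once it is in hand the rest is formal.
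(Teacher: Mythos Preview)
Your sketch is correct and is the standard argument: invoke Yau's solution of the Calabi conjecture to obtain the unique Ricci-flat K\"ahler metric in the class $\omega$, use the Bochner principle on the compact Ricci-flat K\"ahler manifold to make the holomorphic symplectic form parallel, and conclude that the holonomy lies in $\mathrm{Sp}(n)$ so that the metric is hyperk\"ahler; uniqueness then follows from the uniqueness clause in the Calabi--Yau theorem.

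There is nothing to compare against, however: the paper does not prove this statement. It is recorded as a ``celebrated Theorem of Yau'' and used as a black box (its only role is to feed into the proof of Proposition~\ref{prop:3planes_open} and thence Lemma~\ref{lem:abelian}). So your proposal goes strictly beyond what the paper does; it supplies the standard justification the paper simply cites.
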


\begin{proof}[Proof of Lemma \ref{lem:abelian}]
If we fix a hyperk\"ahler metric $g$ on $X$, then for every $a,b \in F(g)$ we have $[f_a,f_b]=0$. This holds already at the level of forms, using the definition $f_a=*^{-1}e_a*$ and the fact that $*$ depends only on the metric. Let $a \in H^2(X,\mathbb{R})$ be a class for which $f_a$ is defined. Since $f$ is rational, the condition $[f_a,f_b]=0$ is Zariski closed with respect to $b \in H^2(X,\mathbb{R})$. From Proposition \ref{prop:3planes_open} it follows that the set 
\[ \{ b \in H^2(X,\mathbb{R}) \mid a,b \in F(g) \textrm{ for some metric } g \} \]
is open. Since $[f_a,f_b]=0$ for every $b$ in this open set, we get $[f_a,f_b]=0$ for every $b$ where $f_b$ is defined.
\end{proof}

While the statement of Theorem \ref{thm:LL} is over $\mathbb{Q}$, we will give the proof over $\mathbb{R}$ following \cite{LL97:LooijengaLunts}. 

\begin{proof}[Proof of Theorem \ref{thm:LL}]
Consider the subspace \[V \coloneqq V_{-2} \oplus V_0 \oplus V_2 \subset \LLV,\] where $V_2$ is the abelian Lie subalgebra generated by $e_{a}$ with $a \in H^2(X,\mathbb{R})$, $V_{-2}$ is the abelian Lie subalgebra generated by the $f_{a}$ with $a \in H^2(X,\mathbb{R})$ where $f_a$ is defined, and $V_0$ is the Lie subalgebra generated by $[e_{a},f_{b}]$. To prove $(1)$ and $(2)$, it is enough to show that $V$ is a Lie subalgebra of $\LLV$. Indeed, since $\LLV$ is generated by elements contained in $V$ this would imply $V= \LLV$. Since $V_2$ and $V_{-2}$ are abelian, it suffices to show that
$[V_0,V_2] \subset V_2$ and $[V_0,V_{-2}] \subset V_{-2}$. 

\begin{claim}
Define $V'_0 \coloneqq [V_0,V_0]$. We have $V_0=V_0' \oplus \mathbb{R}h$ where $V_0'$ acts on cohomology via derivations.
\end{claim}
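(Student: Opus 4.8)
The plan is to show $V_0 = V_0' \oplus \mathbb{R}h$ by first identifying the degree-zero operators among the generators and then using the relations already established for the metric algebras $\mathfrak{g}_g$. First I would note that $V_0$ is spanned by $h$ together with the brackets $[e_a, f_b]$ for $a, b \in H^2(X, \mathbb{R})$ (with $f_b$ defined). Taking $a = b$ gives $[e_a, f_a] = h$, so $h \in V_0$; the task is therefore to control the operators $K_{a,b} \coloneqq [e_a, f_b]$ and show they lie in $V_0' \oplus \mathbb{R}h$ with the off-$h$ part acting by derivations. Since $e \colon H^2(X, \mathbb{R}) \to \End_2(H^*(X,\mathbb{R}))$ and $f$ (where defined) are both rational and the bracket is bilinear, the assignment $(a,b) \mapsto K_{a,b}$ is rational in $(a,b)$, so it suffices to understand $K_{a,b}$ for $(a,b)$ in any Zariski-dense subset of the locus where $f$ is defined.

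Next I would exploit the metric algebras. For a fixed hyperk\"ahler metric $g$ with characteristic $3$-plane $F(g)$ and $a, b \in F(g)$, Theorem \ref{valg} and the subsequent proposition identify the Lie algebra $\mathfrak{g}_g$ with $\mathfrak{g}(\mathbb{H}) \cong \mathfrak{so}(4,1)$, and show that $(\mathfrak{g}_g)_0 = (\mathfrak{g}_g)_0' \oplus \mathbb{R}h$ with $(\mathfrak{g}_g)_0' \cong \mathbb{H}_0$ acting on $H^*(X,\mathbb{R})$ by derivations (as Weil operators). In particular, for such $a, b$ the operator $K_{a,b} = [e_a, f_b]$ decomposes as $c_{a,b}\, h + D_{a,b}$ with $D_{a,b}$ a derivation and $c_{a,b} \in \mathbb{R}$. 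By Proposition \ref{prop:3planes_open}, as $g$ varies the pairs $(a,b)$ with $a,b \in F(g)$ sweep out a set whose span is all of $H^2(X,\mathbb{R}) \times H^2(X,\mathbb{R})$, and this set contains an open, hence Zariski-dense, subset of the domain of definition. Since ``$K_{a,b} - c\,h$ is a derivation for some scalar $c$'' is expressed by polynomial (indeed linear) conditions on the entries of $K_{a,b}$, and $K_{a,b}$ depends rationally on $(a,b)$, this property propagates to every $(a,b)$ where $f_b$ is defined. Writing the derivation component $D_{a,b}$ and extracting $c_{a,b}$ linearly from $K_{a,b}$ (e.g.\ via the action on a fixed degree-zero vector, or via the trace on a single graded piece), one sees $c \colon H^2 \times H^2 \to \mathbb{R}$ is rational, hence defined everywhere, and that the span $V_0'$ of the $D_{a,b}$ consists of derivations. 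Then $V_0 \subseteq V_0' + \mathbb{R}h$, and conversely $V_0' = [V_0, V_0]$ since derivations of positive-dimensional cohomology are traceless on each graded piece while $[h, -]$ produces no new $h$-component, giving $V_0 = V_0' \oplus \mathbb{R}h$ (the sum is direct because $h$ is not a derivation of the cup product — it fails the Leibniz rule, scaling $H^N \cdot H^N$ by $0$ rather than by a sum of weights).

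Finally, I would verify that $V_0'$ is closed under the bracket and is genuinely the derived algebra: the bracket of two derivations is a derivation, so $[V_0', V_0'] \subseteq V_0'$, and since $[h, V_0'] \subseteq V_0'$ as well (ad $h$ is zero on the degree-zero part), $V_0 = V_0' \oplus \mathbb{R}h$ is a Lie algebra decomposition with $[V_0,V_0] = V_0'$. The main obstacle I anticipate is the density/propagation step: one must be careful that ``being a derivation of the cup-product algebra'' is a Zariski-closed condition that interacts correctly with the rational (only partially defined) map $f$, and that the characteristic $3$-planes really do fill up an open set of pairs — this is exactly where Proposition \ref{prop:3planes_open} and Lemma \ref{lem:abelian}'s method (transporting a property from the metric locus to the whole rational domain via Zariski density) are used. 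Once that is in place the rest is bookkeeping with the explicit $\mathfrak{so}(4,1)$-relations from Theorem \ref{valg}.
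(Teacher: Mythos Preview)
Your overall strategy---reduce to pairs $(a,b)$ lying in a single characteristic $3$-plane $F(g)$ via Zariski density, and then import the structure of $(\mathfrak{g}_g)_0$ from Theorem~\ref{valg}---is exactly the paper's approach. There is, however, a genuine gap in the way you assemble the pieces.

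You show that for $a,b\in F(g)$ one has $K_{a,b}=[e_a,f_b]=c_{a,b}h+D_{a,b}$ with $D_{a,b}$ a derivation, and you propagate this by density. From this you correctly obtain $V_0\subset D+\mathbb{R}h$ (where $D$ is the space of derivations) and hence $V_0'=[V_0,V_0]\subset D$. But you then slide into treating $V_0'$ as the \emph{span} of the $D_{a,b}$, and your argument that this span equals $[V_0,V_0]$ is circular: knowing that the $D_{a,b}$ are derivations does not by itself place them inside $[V_0,V_0]$. Concretely, to get $V_0=V_0'+\mathbb{R}h$ you must show $D_{a,b}\in [V_0,V_0]$ for generators, and nothing in your argument does this. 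The paper closes this gap by using more of Theorem~\ref{valg}(3): for $a,b\in F(g)$ the element $D_{a,b}$ lies in $(\mathfrak{g}_g)_0'$, which by definition equals $[(\mathfrak{g}_g)_0,(\mathfrak{g}_g)_0]$; since $(\mathfrak{g}_g)_0\subset V_0$, this gives $D_{a,b}\in [V_0,V_0]=V_0'$ directly. (A related minor point: $V_0$ is the Lie subalgebra \emph{generated} by the $K_{a,b}$, not their linear span; once you know the generators lie in the Lie subalgebra $V_0'+\mathbb{R}h$, the inclusion $V_0\subset V_0'+\mathbb{R}h$ follows.)

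One place where your argument is actually cleaner than the paper's: for directness of the sum you observe that $h$ fails the Leibniz rule on the cup product (e.g.\ $h(x\cdot y)=N\,xy$ while $h(x)y+xh(y)=0$ for $x,y\in H^N$), so $h\notin D\supset V_0'$. The paper instead invokes reductivity of $\LLV_0$ from Proposition~\ref{LL:ss} to conclude $h\notin \LLV_0'\supset V_0'$; your route avoids that appeal.
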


\begin{proof}[Proof of the claim]
Proposition \ref{prop:3planes_open} implies that the set $\{(a,b) \in H^2(X,\mathbb{R}) \times H^2(X,\mathbb{R}) \mid a,b \in F(g) \textrm{ for some metric } g \}$ is open. Arguing as in the proof of Lemma \ref{lem:abelian} we see that $V_0$ is generated by the elements $[e_a,f_b]$ with $a,b \in F(g)$ for some metric $g$. If we fix the hyperk\"ahler metric $g$, the elements $[e_a,f_b]$ with $a,b \in F(g)$ generate the Lie algebra $(\mathfrak{g}_g)_0$ and their brackets the Lie subalgebra $(\mathfrak{g}_g)'_0$. Thus, $V'_0$ is generated by the Lie algebras $(\mathfrak{g}_g)'_0$ and their brackets. Since the Lie algebras $(\mathfrak{g}_g)'_0$ act on cohomology via derivations, the same is true for their brackets, hence $V'_0$ acts via derivations. Moreover, from point $(3)$ of Theorem \ref{valg} we get the decomposition $V_0=V_0' + \mathbb{R}h$. Since $\LLV_0$ is reductive (Proposition \ref{LL:ss}) and $h$ is in the center, we get $h \not \in V_0' \subset \LLV'_0$, so the sum is direct.
\end{proof}

Now we show that $[V_0,V_2] \subset V_2$. Since the adjoint action of $h$ gives the grading, it is enough to show that $[V_0',V_2] \subset V_2$. Let $u \in V_0'$ and $e_a \in V_2$. For every $x \in H^2(X,\mathbb{R})$ we have
\begin{equation}\label{eq:derivations}
     [u,e_a](x) = u(a.x) -a.u(x) = u(a).x=e_a(x),
\end{equation}
because $u$ is a derivation. 

The inclusion $[V_0,V_{-2}] \subset V_{-2}$ is more difficult. Let $G'_0 \subset GL(H^*(X,\mathbb{R}))$ be the closed Lie subgroup with Lie algebra $V'_0$. For every $t \in G'_0$ we have $t e_a t^{-1} = e_{t(a)}$ and $t h t^{-1} =h $, by integrating the analogous relations at the level of Lie algebras. Since the third element of an $\mathfrak{sl_2}$-triple is unique, we get that $t f_a t^{-1} = f_{t(a)}$. This implies that the adjoint action of $G'_0$ leaves $V_{-2}$ invariant, hence so does the Lie algebra $V'_0$. 

To summarize, at this point we showed $(1)$ and $(2)$, and also that $\LLV'_0$ acts via derivations. It remains to show that $\LLV'_0 \cong \mathfrak{so}(H^2(X,\mathbb{R}),q)$. 

We begin by defining the map $\LLV'_0 \rightarrow \mathfrak{so}(H^2(X,\mathbb{R}),q)$. For this, we consider the restriction of the action of $\LLV'_0$ to $H^2(X,\mathbb{R})$, and show that it preserves infinitesimally the Beauville--Bogomolov--Fujiki form $q$. We can fix a hyperk\"ahler metric $g$ and check this for $(\mathfrak{g}_g)'_0$, because these Lie subalgebras generate $\LLV'_0$. From Theorem \ref{valg} it is enough to check it for the Weil operators associated to the three complex structures $I,J,K$ induced from $g$. Fix one of them, say $I$; we have to verify that 
\[ q(I\alpha,\beta ) + q(\alpha, I \beta) = 0,\]
for every $\alpha,\beta \in H^2(X,\mathbb{R})$. This follows from a direct verification using the $q$-orthogonal decomposition
\[ H^2(X,\mathbb{R}) = (H^{2,0}(X) \oplus H^{0,2}(X))_{\mathbb{R}} \oplus H^{1,1}(X,\mathbb{R}), \]
induced by the Hodge decomposition with respect to the complex structure $I$. 

To conclude the proof it remains to show that this map is bijective; we begin with the surjectivity. Fix a hyperk\"ahler metric $g$, the image of the Lie algebra $(\mathfrak{g}_g)'_0$ in $\mathfrak{so}(H^2(X,\mathbb{R}),q)$ is generated (as a vector space) by the Weil operators associated to $I,J,K$. Using this, it is easy to see that $(\mathfrak{g}_g)'_0$ kills the $q$-orthogonal complement to the characteristic $3$-plane $F(g)$, and it maps onto $\mathfrak{so}(F(g),q|_{F(g)})$. One can check that varying the metric $g$ the Lie subalgebras $\mathfrak{so}(F(g),q|_{F(g)})$ generate $\mathfrak{so}(H^2(X,\mathbb{R}))$, hence the surjectivity. 

For the injectivity we proceed as follows. Let $SH^2(X,\mathbb{R}) \subset H^*(X,\mathbb{R})$ be the graded subalgebra generated by $H^2(X,\mathbb{R})$; it is a $\LLV$ representation for Corollary \ref{cor:verbrep}. By Lemma \ref{lem:injectivity}, the map $\LLV \rightarrow \mathfrak{gl}(SH^2(X,\mathbb{R}))$ is injective. Since $\LLV'_0$ acts via derivations, the map must be injective already at the level of $H^2(X,\mathbb{R})$.
\end{proof}

\begin{definition}
We define the \textit{Mukai completion} of the quadratic vector space $(H^2(X,\mathbb{Q}),q)$ as the quadratic vector space 
\[ (\Tilde{H}(X,\mathbb{Q}),\Tilde{q}) \coloneqq (H^2(X,\mathbb{Q}),q) \oplus U\]
where $U$ is a two dimensional vector space with quadratic form given by $\begin{pmatrix}
0 & 1 \\
1 & 0 
\end{pmatrix}$
\end{definition}

\begin{cor}
There is a natural isomorphism
\[ \mathfrak{g}_{\mathrm{tot}}(X) \cong \mathfrak{so}(\Tilde{H}(X,\mathbb{Q}),\Tilde{q}). \]
\end{cor}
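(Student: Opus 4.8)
The plan is to combine the structural description of $\LLV$ from Theorem \ref{thm:LL} with the standard Lie-theoretic fact that an orthogonal Lie algebra $\mathfrak{so}(Q)$ of a non-degenerate quadratic space $Q$ containing a hyperbolic plane carries a $5$-grading (a $\mathbb{Z}$-grading in degrees $-2,0,2$ in our normalization) whose $\pm 2$-pieces are abelian and isomorphic, as modules over the degree-$0$ part, to the orthogonal complement of that hyperbolic plane. Concretely, I would fix the hyperbolic plane $U \subset \Tilde{H}(X,\mathbb{Q})$ with isotropic basis $u_{+}, u_{-}$ and let $h_0 \in \mathfrak{so}(\Tilde{H},\Tilde{q})$ be the semisimple element acting as $2$ on $u_{+}$, as $-2$ on $u_{-}$, and as $0$ on $H^2(X,\mathbb{Q})$; its adjoint action furnishes the grading $\mathfrak{so}(\Tilde{H},\Tilde{q}) = \mathfrak{n}^{-} \oplus \mathfrak{l} \oplus \mathfrak{n}^{+}$, where $\mathfrak{l} \cong \mathfrak{so}(H^2(X,\mathbb{Q}),q) \oplus \mathbb{Q}h_0$ is the Levi and $\mathfrak{n}^{\pm} \cong H^2(X,\mathbb{Q})$ as $\mathfrak{so}(H^2,q)$-modules via the vectors $u_{\pm} \wedge v$ for $v \in H^2$.

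Next I would write down the isomorphism degree by degree. By Theorem \ref{thm:LL}(3), $\LLV_0 = \LLV_0' \oplus \mathbb{Q}h$ with $\LLV_0' \cong \mathfrak{so}(H^2(X,\mathbb{Q}),q)$, so $\LLV_0 \cong \mathfrak{l}$, sending $h$ to $h_0$; by Theorem \ref{thm:LL}(2), $\LLV_{\pm 2} \cong H^2(X,\mathbb{Q}) \cong \mathfrak{n}^{\pm}$ as vector spaces, and these identifications are compatible with the respective actions of $\LLV_0' \cong \mathfrak{so}(H^2,q)$ because in both cases the action on the $\pm 2$-piece is the standard representation on $H^2$ (for $\LLV$ this was exactly the content of the computation in the proof of Theorem \ref{thm:LL}, see \eqref{eq:derivations} showing $\LLV_0'$ acts on $\LLV_2$ through its action on $H^2$; for $\mathfrak{so}(\Tilde{H})$ it is the definition of $\mathfrak{n}^{\pm}$). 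The only remaining structure constant to match is the bracket $[\LLV_2, \LLV_{-2}] \subset \LLV_0$ against $[\mathfrak{n}^{+}, \mathfrak{n}^{-}] \subset \mathfrak{l}$: in $\mathfrak{so}(\Tilde{H},\Tilde{q})$ one computes $[u_{+}\wedge v,\, u_{-}\wedge w] = \tilde q(v,w)\,h_0 + (\text{the rank-two skew operator } v \wedge w \text{ on } H^2)$, up to a universal scalar, and on the $\LLV$ side the identity $[e_a, f_b]$ decomposes into its trace part (a multiple of $h$) and its traceless part in $\LLV_0'$, which one identifies with the same skew endomorphism of $H^2$ determined by $q$. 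Matching these is the real content; it is essentially a bookkeeping check, cleanest carried out after fixing a hyperkähler metric $g$ and using the explicit basis $e_I,e_J,e_K,f_I,f_J,f_K, K_{IJ},K_{IK},K_{JK},h$ from Theorem \ref{valg}, whose brackets are listed there.

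The assembly then goes as follows. First, fix any $\mathbb{Q}$-linear isometry $H^2(X,\mathbb{Q}) \xrightarrow{\sim} H^2(X,\mathbb{Q})$ (the identity) and extend the three graded pieces to a $\mathbb{Q}$-linear map $\Phi\colon \LLV \to \mathfrak{so}(\Tilde{H}(X,\mathbb{Q}),\Tilde{q})$ using the identifications above. Second, check that $\Phi$ is a Lie algebra homomorphism: the brackets within $\LLV_0$ and those of $\LLV_0$ with $\LLV_{\pm 2}$ are preserved by the module-compatibility just discussed, the bracket $[\LLV_2,\LLV_2] = 0 = [\mathfrak{n}^{+},\mathfrak{n}^{+}]$ (and likewise in degree $-2$, this is Lemma \ref{lem:abelian}), and $[\LLV_2,\LLV_{-2}] \to [\mathfrak{n}^{+},\mathfrak{n}^{-}]$ is the step verified above. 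Third, conclude $\Phi$ is an isomorphism by a dimension count: both sides have dimension $2\dim H^2(X,\mathbb{Q}) + \binom{\dim H^2}{2} + 1 = \binom{\dim \Tilde{H}}{2}$, so a Lie algebra map which is bijective in each graded degree — which $\Phi$ visibly is — is an isomorphism. Naturality is automatic because every choice made (the hyperbolic summand $U$, the element $h$, the standard-representation identifications) is canonical given the statement of Theorem \ref{thm:LL}.

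The main obstacle is pinning down the scalar in $[\LLV_2,\LLV_{-2}]$ and confirming it is the \emph{same} universal constant that appears in $[\mathfrak{n}^{+},\mathfrak{n}^{-}]$ inside $\mathfrak{so}(\Tilde{H},\Tilde{q})$ — i.e. that the quadratic form controlling the trace part of $[e_a,f_b]$ on $H^2$ is genuinely the Beauville--Bogomolov--Fujiki form $q$ up to the correct normalization, not some nonzero multiple that would force rescaling the hyperbolic plane. This is where one genuinely needs the geometry: the Fujiki relation $\int_X a^{2n} = c_X\, q(a)^n$ identifies, up to the constant $c_X$, the self-intersection pairing used to build $f_a$ with $q$, so the constant can be absorbed by scaling the isotropic generators $u_{\pm}$, and one should state explicitly that the isomorphism is of \emph{abstract} quadratic spaces (equivalently, $\Tilde{q}$ is only well-defined up to the usual rescaling of $q$). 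I expect everything else — the $5$-grading of $\mathfrak{so}$, the module identifications, the dimension count — to be routine.
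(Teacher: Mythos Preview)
Your proposal is correct and takes essentially the same route as the paper: build a graded linear isomorphism using Theorem~\ref{thm:LL} and then verify it respects brackets. The paper packages the linear isomorphism via $\bigwedge^2 \tilde H \cong \mathfrak{so}(\tilde H,\tilde q)$ and then simply cites \cite[Proposition~2.7]{GKLR20} for the bracket computation, whereas you spell out the degree-by-degree matching and correctly flag the one nontrivial check, namely that the decomposition of $[e_a,f_b]$ into its $h$-component and its $\LLV_0'$-component matches $[u_+\wedge a,\,u_-\wedge b]$ in $\mathfrak{so}(\tilde H,\tilde q)$, with the scalar absorbed into the normalization of the hyperbolic plane.
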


\begin{proof}
Recall that for a rational quadratic space $(V,q)$ there is an isomorphism 
\begin{align*}
    \bigwedge^2 V &\xrightarrow{\simeq} \mathfrak{so}(V,q), \\
    x \wedge y & \mapsto \frac{1}{2} (q(x,-)y - q(y,-)x)
\end{align*}
The desired isomorphism follows from this, at least at the level of vector spaces. The computations to show that it is in fact an isomorphism of Lie algebras are carried out in \cite[Proposition $2.7$]{GKLR20}.
\end{proof}

\begin{ex}
If $X$ is a $K3$ surface, then the Mukai completion $\tilde{H}(X,\mathbb{Q})$ is the rational cohomology $H^*(X,\mathbb{Q})$ with the usual Mukai pairing. This identification is compatible with the action of $\LLV$. 
\end{ex}

\begin{cor}
The Hodge structure on $H^*(X,\mathbb{R})$ is determined by the Hodge structure on $H^2(X,\mathbb{R})$ and by the action of $\LLV_{2,\mathbb{R}} \cong H^2(X,\mathbb{R})$ on $H^*(X,\mathbb{R})$. 
\end{cor}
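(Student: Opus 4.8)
The plan is to reconstruct, for the complex structure $I$ of $X$, the operator $\mathcal{W}_I\in\End(H^*(X,\mathbb{R}))$ acting on $H^{p,q}(X)$ as multiplication by $i(p-q)$, using only the two given pieces of data, and then to recover the Hodge decomposition of each $H^k(X,\mathbb{C})$ as the eigenspace decomposition of $\mathcal{W}_I|_{H^k}$. This last step is immediate: on $H^k$ the integer $p+q=k$ is fixed, so the eigenvalue $i(p-q)$ determines the bidegree, and $H^{p,q}(X)=H^k(X,\mathbb{C})\cap\ker\bigl(\mathcal{W}_I-i(p-q)\,\mathrm{id}\bigr)$; ranging over all $k$ recovers the Hodge structure on all of $H^*(X,\mathbb{C})$.

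To produce $\mathcal{W}_I$, first read off from the Hodge structure on $H^2(X)$ the line $H^{2,0}(X)=\mathbb{C}\sigma$ spanned by the holomorphic symplectic form, and hence the positive $2$-plane $P=\langle\Re\sigma,\Im\sigma\rangle=(H^{2,0}\oplus H^{0,2})\cap H^2(X,\mathbb{R})$. Choosing a K\"ahler class for $(X,I)$, Yau's theorem provides a hyperk\"ahler metric $g$ inducing $I$; after rescaling $\sigma$ we may take $\sigma=\sigma_I=\omega_J+i\omega_K$, so that $\Re\sigma=[\omega_J]$ and $\Im\sigma=[\omega_K]$ are K\"ahler classes for the complex structures $J$ and $K$, in particular Lefschetz classes. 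Then $e_{\Re\sigma},e_{\Im\sigma}$ are among the operators comprising the action of $\LLV_2$, and by Jacobson--Morozov (Theorem \ref{thm:JM}) the dual operators $f_{\Re\sigma},f_{\Im\sigma}$ are uniquely determined by them. Hence the operator
\[ \mathcal{W}:=[\,e_{\Re\sigma},\,f_{\Im\sigma}\,]=[\,e_{\omega_J},\,f_{\omega_K}\,] \]
is determined by the given data, and by the computation recorded in the proof of Theorem \ref{valg} and in the proposition following it, it equals $\mathcal{W}_I$: under the isomorphism $(\mathfrak{g}_g)'_0\cong\mathbb{H}_0$, which is compatible with the actions on $H^*(X,\mathbb{R})$, one has $[e_{\omega_J},f_{\omega_K}]=K_{JK}\mapsto I$, and $I\in\mathbb{H}_0$ acts on $H^*(X,\mathbb{R})$ as multiplication by $i(p-q)$ on $(p,q)$-classes for the Hodge structure attached to $I$.

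It remains to check that $\mathcal{W}$ is independent of the auxiliary choices (the metric $g$ and the rescaling of $\sigma$), which only alter the ordered pair $(\Re\sigma,\Im\sigma)$ by a nonzero rescaling and a rotation inside $P$. Rescaling is harmless because $e_{\lambda a}=\lambda e_a$ forces $f_{\lambda a}=\lambda^{-1}f_a$, so $[e_{\lambda a},f_{\lambda b}]=[e_a,f_b]$; rotation is harmless because each admissible choice identifies $\mathcal{W}$ with $\mathcal{W}_I$, an operator depending only on $I$ and hence only on the Hodge structure on $H^2$. The single point that genuinely requires care — and the only place geometry enters — is making sure $\Re\sigma$ and $\Im\sigma$ lie in the Zariski-open locus where $f$ is defined, which is exactly why the argument passes through a hyperk\"ahler metric and exhibits them as K\"ahler classes for $J$ and $K$; everything else is bookkeeping. (One may note, more crudely, that the $\LLV_2$-action already determines the action of all of $\LLV$ on $H^*(X,\mathbb{R})$, since $\LLV_{-2}$ is recovered from $\LLV_2$ by Jacobson--Morozov and $\LLV$ is spanned by $\LLV_{\pm 2}$ together with $[\LLV_2,\LLV_{-2}]$.)
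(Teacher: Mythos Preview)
Your proof is correct and follows the same route as the paper: recover the Weil operator for $I$ as $K_{JK}=[e_{\omega_J},f_{\omega_K}]$, noting that $[\omega_J]$ and $[\omega_K]$ are the real and imaginary parts of the symplectic form and hence determined by the Hodge structure on $H^2$, while $e_{\omega_J},e_{\omega_K}$ (and thus, by Jacobson--Morozov, $f_{\omega_K}$) are determined by the $\LLV_2$-action. You spell out several points the paper leaves implicit---the explicit appeal to Yau to realize $\Re\sigma,\Im\sigma$ as K\"ahler classes so that $f$ is defined there, and the check that the construction is insensitive to rescaling and rotation of $\sigma$---but the argument is the same.
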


\begin{proof}
Let $I,J,K$ be the three complex structures associated to a hyperk\"ahler metric $g$, and assume $I$ is the given one. As recalled before, the commutator $K_{JK}=[e_{J},f_{K}]$ acts like the Weil operator for $I$; hence it recovers the Hodge structure. By definition, it depends only on the classes $[\omega_I],[\omega_K]$ and their action on $H^*(X,\mathbb{R})$. Since the Hodge structure is given by the class of the symplectic form $[\sigma_I]=[\omega_J]+i[\omega_K]$, the conclusion follows.
\end{proof}

Recall that if $\mathfrak{g}$ is a Lie algebra, the \textit{universal enveloping algebra} of $\mathfrak{g}$ is the smallest associative algebra extending the bracket on $\mathfrak{g}$. It is defined as the quotient of the tensor algebra by the elements of the form: 
\[ x \otimes y -y \otimes x -[x,y] \quad x,y \in \mathfrak{g}.\]
In particular, if $\mathfrak{g}$ is abelian, then $U\mathfrak{g}=\mathrm{Sym}^*\mathfrak{g}$. 

\begin{cor}\label{cor:UEA}
There is a natural decomposition 
\[ U\mathfrak{g}_{\mathrm{tot}}(X)=U\mathfrak{g}_{\mathrm{tot}}(X)_{2} \cdot U\mathfrak{g}_{\mathrm{tot}}(X)_0 \cdot U\mathfrak{g}_{\mathrm{tot}}(X)_{-2}.\]
\end{cor}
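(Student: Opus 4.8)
The plan is to derive the decomposition from the Poincar\'e--Birkhoff--Witt theorem, applied to the canonical grading
\[ \LLV=\LLV_{-2}\oplus\LLV_{0}\oplus\LLV_{2} \]
established in Theorem \ref{thm:LL}. First I would recall that each graded piece $\LLV_{i}$ is a Lie subalgebra and that the map $U\LLV_{i}\to U\LLV$ induced by the inclusion $\LLV_{i}\hookrightarrow\LLV$ is injective (itself a consequence of PBW), so that one may regard $U\LLV_{2}$, $U\LLV_{0}$, $U\LLV_{-2}$ as subalgebras of $U\LLV$; since $\LLV_{\pm2}$ are abelian, $U\LLV_{\pm2}\cong\mathrm{Sym}^{*}H^{2}(X,\mathbb{Q})$ by part $(2)$ of the theorem. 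The right-hand side of the claimed identity is then read as the linear span inside $U\LLV$ of the products $u_{+}\cdot u_{0}\cdot u_{-}$ with $u_{\pm}\in U\LLV_{\pm2}$ and $u_{0}\in U\LLV_{0}$, equivalently as the image of the multiplication map
\[ m\colon U\LLV_{2}\otimes_{\mathbb{Q}}U\LLV_{0}\otimes_{\mathbb{Q}}U\LLV_{-2}\longrightarrow U\LLV. \]

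Next I would choose an ordered homogeneous basis of $\LLV$ in which the basis vectors of $\LLV_{2}$ come first, those of $\LLV_{0}$ second, and those of $\LLV_{-2}$ last. By PBW the associated ordered monomials form a $\mathbb{Q}$-basis of $U\LLV$, and each such monomial factors uniquely as a product of an ordered monomial in the $\LLV_{2}$-basis, one in the $\LLV_{0}$-basis, and one in the $\LLV_{-2}$-basis. As these three families of ordered monomials are precisely the PBW bases of $U\LLV_{2}$, $U\LLV_{0}$ and $U\LLV_{-2}$, their tensor products form a basis of the source of $m$ that $m$ carries bijectively onto the PBW basis of $U\LLV$. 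Hence $m$ is an isomorphism of $\mathbb{Q}$-vector spaces, which is the asserted decomposition. For naturality one observes that the grading of Theorem \ref{thm:LL} and the inclusions $\LLV_{i}\hookrightarrow\LLV$ are canonical, and that although the argument passed through a choice of basis, the map $m$ itself — and hence the decomposition — does not depend on it.

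There is no essential difficulty here; the only point needing a little care is the identification of ordered PBW monomials for $\LLV$ with triples of ordered PBW monomials for the three graded summands, which works exactly because the chosen basis is the concatenation, in the prescribed order, of bases of $\LLV_{2}$, $\LLV_{0}$ and $\LLV_{-2}$. It is also worth keeping in mind that $m$ is only an isomorphism of vector spaces and not of algebras — the three factors do not commute past one another — so the corollary records a feature of the multiplicative structure of $U\LLV$ rather than a tensor-product decomposition of algebras.
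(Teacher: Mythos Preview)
Your argument is correct: the decomposition is an immediate instance of the Poincar\'e--Birkhoff--Witt theorem applied to a homogeneous ordered basis adapted to the grading $\LLV=\LLV_{2}\oplus\LLV_{0}\oplus\LLV_{-2}$ of Theorem~\ref{thm:LL}. The paper itself states the corollary without proof, so there is nothing to compare against; your PBW argument is exactly the intended one-line justification, and your remarks on naturality and on $m$ being only a linear (not algebra) isomorphism are apt.
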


\section{Primitive decomposition}
In this section, we study the relationship between the actions of $\LLV$ and $\LLV_0$ on $H^*(X,\mathbb{Q})$, where $X$ is a compact hyperk\"ahler manifold of dimension $\textrm{dim}(X)=2n$. The main reference is \cite{LL97:LooijengaLunts}, see also \cite[Theorem $4.4$]{Eyal:monodromy}.

\begin{definition}
Let $V$ be a $\mathfrak{g}_{\mathrm{tot}}(X)$-representation. We define the primitive subspace as: 
\[ \mathrm{Prim}(V)=\{ x \in V \mid (\LLV_{-2}).x=0 \}.\]
\end{definition}

If $V=H^*(X,\mathbb{Q})$ is the standard representation we denote the primitive subspace as $\mathrm{Prim}(X)$.
\begin{remark}
The primitive subspace $\mathrm{Prim}(V)$ is a $\mathfrak{g}_{\mathrm{tot}}(X)_0$-subrepresentation. This follows from the fact that $[\LLV_0,\LLV_{-2}] \subset \LLV_{-2}$.
\end{remark}

\begin{definition}\label{Def:Verbitsky}
The \textit{Verbitsky component} $SH^2(X,\mathbb{Q}) \subseteq H^*(X,\mathbb Q)$ is the graded subalgebra generated by $H^2(X,\mathbb{Q})$. 
\end{definition}

\begin{prop}[{{\cite[Corollary $1.13$ and Corollary $2.3$]{LL97:LooijengaLunts}}}]
The cohomology $H^*(X,\mathbb{Q})$ is generated by $\mathrm{Prim}(X)$ as a $SH^2(X,\mathbb{Q})$-module. Moreover, if $W \subset \mathrm{Prim}(X) $ is a $\LLV_0$ irreducible subrepresentation, then $SH^2(X,\mathbb{Q}).W \subset H^*(X,\mathbb{Q})$ is an irreducible $\LLV$-module. 
\end{prop}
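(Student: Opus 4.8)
The plan is to prove the two assertions separately, using the structural results of Theorem~\ref{thm:LL} together with the representation-theoretic machinery built so far, in particular Corollary~\ref{cor:UEA}.

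First I would show that $H^*(X,\mathbb{Q})$ is generated by $\mathrm{Prim}(X)$ over $SH^2(X,\mathbb{Q})$. By Corollary~\ref{cor:UEA} we have the factorization $U\LLV = U\LLV_2 \cdot U\LLV_0 \cdot U\LLV_{-2}$, and since $\LLV_2 \cong H^2(X,\mathbb{Q})$ is abelian, $U\LLV_2 = \mathrm{Sym}^*(\LLV_2)$ acts on cohomology precisely through cup product, i.e.\ its image in $\End(H^*(X,\mathbb{Q}))$ is $SH^2(X,\mathbb{Q})$ acting by multiplication. The key input is that $H^*(X,\mathbb{Q})$ is a \emph{cyclic} $U\LLV$-module, or at least that it is spanned by $U\LLV$ applied to a set of highest-weight-type vectors: concretely, every class can be lowered by repeated application of $\LLV_{-2}$ until it lands in $\mathrm{Prim}(X)$ (here one uses that $\LLV_{-2}$ strictly decreases a suitable filtration/degree, so the process terminates), and then the factorization lets one write any element as $U\LLV_2 \cdot U\LLV_0$ applied to $\mathrm{Prim}(X)$. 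Since $\mathrm{Prim}(X)$ is already $\LLV_0$-stable, $U\LLV_0 \cdot \mathrm{Prim}(X) = \mathrm{Prim}(X)$, and $U\LLV_2 \cdot \mathrm{Prim}(X) = SH^2(X,\mathbb{Q})\cdot \mathrm{Prim}(X)$, which gives the first claim. One has to be a little careful that the lowering argument really does exhaust all of $H^*(X,\mathbb{Q})$ and not just the Verbitsky component; this uses that $\LLV$ acts on the whole cohomology and that the adjoint grading by $h$ bounds everything.

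Next I would prove the irreducibility statement. Let $W \subset \mathrm{Prim}(X)$ be an irreducible $\LLV_0$-subrepresentation. The $\LLV$-submodule it generates is $U\LLV \cdot W = U\LLV_2 \cdot U\LLV_0 \cdot U\LLV_{-2} \cdot W = SH^2(X,\mathbb{Q})\cdot W$, using $\LLV_{-2}\cdot W = 0$ (as $W$ is primitive) and $U\LLV_0 \cdot W = W$ (as $W$ is $\LLV_0$-stable). So $SH^2(X,\mathbb{Q})\cdot W$ is at least a $\LLV$-submodule; the content is its irreducibility. The standard approach is: any nonzero $\LLV$-submodule $M \subseteq SH^2(X,\mathbb{Q})\cdot W$ must meet the primitive part, i.e.\ $\mathrm{Prim}(M) = M \cap \mathrm{Prim}(SH^2(X,\mathbb{Q})\cdot W) \neq 0$ — this again follows from the lowering argument, since $\LLV_{-2}$ applied enough times to any nonzero vector of $M$ lands in $M \cap \mathrm{Prim}$ and cannot be zero all along. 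Then one identifies $\mathrm{Prim}(SH^2(X,\mathbb{Q})\cdot W)$ with $W$ itself as an $\LLV_0$-module; granting this, $\mathrm{Prim}(M)$ is a nonzero $\LLV_0$-submodule of the irreducible module $W$, hence equals $W$, so $M \supseteq SH^2(X,\mathbb{Q})\cdot W$, forcing $M = SH^2(X,\mathbb{Q})\cdot W$ and proving irreducibility.

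The main obstacle is exactly the claim that $\mathrm{Prim}(SH^2(X,\mathbb{Q})\cdot W) = W$, i.e.\ that passing to the $SH^2$-span does not create new primitive vectors beyond $W$. This is where the specific structure of the Verbitsky component enters: one needs to know the multiplication map $\mathrm{Sym}^*(H^2) \to SH^2(X,\mathbb{Q})$ well enough — in particular its kernel, governed by the relations of the form $\alpha^{n+1} = 0$ for $\alpha$ isotropic, equivalently the $\mathfrak{sl}_2$-theory of $\LLV$ — to control which symmetric combinations of $e_a$'s applied to $W$ can be killed by all the $f_b$'s. Concretely I would argue via the $\mathfrak{sl}_2$-triple $(e_a, h, f_a)$ for a single Lefschetz class $a$: hard Lefschetz on the submodule shows that the only vectors annihilated by $f_a$ for all $a$, inside $SH^2 \cdot W$, are those in degree $0$ relative to $h$-grading after the appropriate shift, which by the cyclic generation are forced to lie in $W$. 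Once this is in hand, everything assembles; I would expect to cite \cite{LL97:LooijengaLunts} for the finer points of this last identification rather than reproving it in full.
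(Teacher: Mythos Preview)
Your use of Corollary~\ref{cor:UEA} to identify $SH^2(X,\mathbb{Q}).W$ with $U\LLV.W$ is exactly the paper's key computation. But the paper organises the proof differently, and the difference is what lets it avoid the obstacle you ran into.

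The paper begins by invoking semisimplicity of $\LLV$ (Proposition~\ref{LL:ss}) to write $H^*(X,\mathbb{Q})=V_1\oplus\cdots\oplus V_k$ as a sum of irreducible $\LLV$-modules. Then the chain $SH^2(X,\mathbb{Q}).\mathrm{Prim}(V_i)=U\LLV_2.\mathrm{Prim}(V_i)=U\LLV.\mathrm{Prim}(V_i)\subseteq V_i$, together with irreducibility of $V_i$, forces equality; summing over $i$ gives the first assertion directly. For the second assertion the paper goes in the \emph{opposite} direction from you: rather than fixing an irreducible $\LLV_0$-module $W$ and showing $SH^2.W$ is $\LLV$-irreducible, it fixes the irreducible $\LLV$-module $V_i$ and shows $\mathrm{Prim}(V_i)$ is $\LLV_0$-irreducible, by observing that a nontrivial splitting $\mathrm{Prim}(V_i)=W_1\oplus W_2$ would produce two proper $\LLV$-submodules $SH^2.W_1$, $SH^2.W_2$ whose sum is $V_i$.

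Both of your gaps trace back to a single missing ingredient: you never use semisimplicity of $\LLV$. In your first part, lowering a class $v$ into $\mathrm{Prim}(X)$ does not by itself place $v$ in $SH^2.\mathrm{Prim}(X)$; what is really needed is that the $\LLV$-submodule $SH^2.\mathrm{Prim}(X)=U\LLV.\mathrm{Prim}(X)$ admits a $\LLV$-complement, whose primitive part is then forced to be zero. In your second part you correctly isolate the obstacle $\mathrm{Prim}(SH^2.W)=W$, but the fix you sketch is off target: the relations $\alpha^{n+1}=0$ and the explicit presentation of $SH^2(X,\mathbb{C})$ describe \emph{one particular} irreducible component (the one generated by $1\in H^0$), whereas for a general $W$ the module $SH^2.W$ is a different irreducible and the kernel of $\mathrm{Sym}^*H^2\otimes W\to SH^2.W$ is not controlled by those relations. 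The paper's top-down route via the irreducible decomposition bypasses this issue entirely.
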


\begin{proof}
Since $\mathfrak{g}_{\mathrm{tot}}(X)$ is semisimple, we can decompose the cohomology in irreducible $\mathfrak{g}_{\mathrm{tot}}(X)$-representations:
\[ H^*(X,\mathbb{Q}) = V_1 \oplus \dots \oplus V_k.\]
The primitive part is compatible with this decomposition, so we get the decomposition 
\[ \mathrm{Prim}(X) =  \mathrm{Prim}(V_1) \oplus \dots \oplus \mathrm{Prim}(V_k),\]
of $\LLV_0$-representations. 

We first want to show that $SH^2(X,\mathbb{Q}).\mathrm{Prim}(V_i)=V_i$. We have
\begin{equation}\label{eq:prim}
    SH^2(X,\mathbb{Q}).\mathrm{Prim}(V_i)=U\LLV_{2}.\mathrm{Prim}(V_i)  
    =U\LLV.\mathrm{Prim}(V_i) \subset V_i,
\end{equation}
where the first equality follows from the fact that $\LLV_{2}$ is abelian, and the second from Corollary \ref{cor:UEA}. Thus $SH^2(X,\mathbb{Q}).\mathrm{Prim}(V_i)$ is a $\mathfrak{g}_{\mathrm{tot}}(X)$ subrepresentation of $V_i$, but $V_i$ is irreducible, so the equality holds. This proves the first part of the proposition. 

To prove the second part it is enough to show that each $\mathrm{Prim}(V_i)$ is irreducible as a $\LLV_0$-representation. Assume it is not and write $\mathrm{Prim}(V_i)=W_1 \oplus W_2$. The identities \eqref{eq:prim} show that acting with $SH^2(X,\mathbb{Q})$ gives a decomposition $V_i=SH^2(X,\mathbb{Q}).W_1 \oplus SH^2(X,\mathbb{Q}).W_2$. Again, this contradicts the fact that $V_i$ is an irreducible $\LLV$-representation.  
\end{proof}

\begin{cor}\label{cor:verbrep}
The Verbitsky component $SH^2(X,\mathbb{Q}) \subset H^*(X,\mathbb{Q})$ is an irreducible $\mathfrak{g}_{\mathrm{tot}}(X)$ subrepresentation.
\end{cor}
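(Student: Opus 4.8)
The plan is to deduce Corollary \ref{cor:verbrep} from the preceding proposition by showing that the Verbitsky component is precisely the irreducible summand generated by the ``top'' primitive line. Concretely, I would first observe that $1 \in H^0(X,\mathbb{Q})$ is primitive: since $\LLV_{-2}$ lowers degree by $2$ and $H^0(X,\mathbb{Q})$ sits in the lowest degree of the whole cohomology (after the shift, it is $V_{-2n}$), we have $\LLV_{-2}.1 = 0$, so $1 \in \mathrm{Prim}(X)$. Moreover the line $\mathbb{Q}\cdot 1$ is visibly a $\LLV_0$-subrepresentation of $\mathrm{Prim}(X)$: it is killed by $\LLV_0'$ (which acts by degree-zero derivations, hence annihilates the unit of the algebra) and $h$ acts by the scalar $-2n$. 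In particular $\mathbb{Q}\cdot 1$ is an irreducible (one-dimensional) $\LLV_0$-subrepresentation sitting inside $\mathrm{Prim}(X)$.

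Next I would apply the second assertion of the preceding proposition with $W = \mathbb{Q}\cdot 1$: it yields that $SH^2(X,\mathbb{Q})\cdot 1 \subset H^*(X,\mathbb{Q})$ is an irreducible $\LLV$-module. But $SH^2(X,\mathbb{Q})\cdot 1 = SH^2(X,\mathbb{Q})$, since $SH^2(X,\mathbb{Q})$ is by Definition \ref{Def:Verbitsky} the subalgebra generated by $H^2(X,\mathbb{Q})$ and multiplication by any element of that subalgebra sends $1$ to itself. Hence $SH^2(X,\mathbb{Q})$ is an irreducible $\LLV$-subrepresentation of $H^*(X,\mathbb{Q})$, which is exactly the claim. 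One should also note that $SH^2(X,\mathbb{Q})$ really is a $\LLV$-stable subspace a priori: it is stable under $\LLV_2$ (which acts by cup product with classes in $H^2(X,\mathbb{Q})$), under $h$ (it is graded), and under $\LLV_0'$ (degree-zero derivations preserve the subalgebra generated by $H^2$); stability under $\LLV_{-2}$ then follows because the above argument exhibits it as a sub-$\LLV$-module, or alternatively one invokes Corollary \ref{cor:UEA} together with $\LLV_{-2}.1=0$ as in equation \eqref{eq:prim}.

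The only genuinely delicate point is the verification that $\mathbb{Q}\cdot 1$ is an irreducible $\LLV_0$-summand of $\mathrm{Prim}(X)$ in the precise sense needed to feed the proposition — i.e.\ that it occurs as one of the $\LLV_0$-irreducible constituents $W$ to which the proposition applies. Since $\mathbb{Q}\cdot 1$ is one-dimensional it is automatically $\LLV_0$-irreducible once we know it is $\LLV_0$-stable, and $\LLV_0$-stability is immediate from the decomposition $\LLV_0 = \LLV_0' \oplus \mathbb{Q}h$ of Theorem \ref{thm:LL}(3): $h$ scales $1$ and $\LLV_0'$ kills it. So there is no real obstacle; the proof is a short bookkeeping argument once the primitivity of the unit class and the derivation property of $\LLV_0'$ are in hand. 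I would present it in three or four lines: (i) $1$ is primitive; (ii) $\mathbb{Q}\cdot 1$ is $\LLV_0$-irreducible; (iii) apply the proposition with $W=\mathbb{Q}\cdot 1$ and identify $SH^2(X,\mathbb{Q})\cdot 1$ with $SH^2(X,\mathbb{Q})$.
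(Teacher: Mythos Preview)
Your proposal is correct and follows exactly the paper's approach: identify $SH^2(X,\mathbb{Q})$ with $SH^2(X,\mathbb{Q}).H^0(X,\mathbb{Q})$, observe that $H^0(X,\mathbb{Q})\subset\mathrm{Prim}(X)$ is a one-dimensional (hence irreducible) $\LLV_0$-subrepresentation, and feed this into the preceding proposition. The only difference is cosmetic: you justify $\LLV_0$-stability of $\mathbb{Q}\cdot 1$ via the derivation property of $\LLV_0'$, whereas it suffices to note that $\LLV_0$ consists of degree-$0$ operators and so preserves $H^0(X,\mathbb{Q})$ automatically.
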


\begin{proof}
By definition we have $SH^2(X,\mathbb{Q})=SH^2(X,\mathbb{Q}).H^0(X,\mathbb{Q})$, and $H^0(X,\mathbb{Q}) \subset \mathrm{Prim}(X)$. So it is enough to observe that $H^0(X,\mathbb{Q})$ is preserved by $\LLV_0$, then we conclude by the previous proposition. 
\end{proof}

\begin{lem}\label{lem:injectivity}
The restriction map $\LLV_{\mathbb{R}} \rightarrow \mathfrak{gl}(SH^2(X,\mathbb{R}))$ is injective. 
\end{lem}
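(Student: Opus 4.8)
The plan is to show that an element $\xi \in \LLV_{\mathbb{R}}$ acting as zero on $SH^2(X,\mathbb{R})$ must be zero. Write $\xi = \xi_{-2} + \xi_0 + \xi_2$ according to the degree decomposition from Theorem \ref{thm:LL}(1), so each graded piece also annihilates $SH^2(X,\mathbb{R})$, as the subalgebra $SH^2(X,\mathbb{R})$ is graded and the action of $\xi_i$ shifts degrees by $i$. It therefore suffices to treat the three pieces separately. For $\xi_2 \in \LLV_2 \cong H^2(X,\mathbb{R})$: the element $\xi_2$ is $e_a$ for some $a \in H^2(X,\mathbb{R})$, and $\xi_2.H^0(X,\mathbb{R}) = \mathbb{R}\cdot a \subset H^2(X,\mathbb{R}) \subset SH^2(X,\mathbb{R})$; so if $\xi_2$ kills $SH^2(X,\mathbb{R})$ then $a = 0$ and hence $\xi_2 = 0$.

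For $\xi_0 \in \LLV_0$: using the decomposition $\LLV_0 = \LLV_0' \oplus \mathbb{R}h$, first note that $h$ acts on $H^{2n} = SH^2(X,\mathbb{R})\cap H^{2n}$ (the top power of a Kähler class is nonzero) by the nonzero scalar $n$, so the $\mathbb{R}h$-component of $\xi_0$ must vanish; then $\xi_0 \in \LLV_0'$ acts by a derivation (Theorem \ref{thm:LL}(3)), hence its action on the generated subalgebra $SH^2(X,\mathbb{R})$ is determined by its action on $H^2(X,\mathbb{R})$, and the latter is faithful because $\LLV_0' \cong \mathfrak{so}(H^2(X,\mathbb{R}),q)$ acts by its standard (faithful) representation on $H^2(X,\mathbb{R})$. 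So $\xi_0 = 0$. For $\xi_{-2} \in \LLV_{-2}$: by Lemma \ref{lem:abelian} the $f$-operators commute, and the cleanest argument is to transport the problem to positive degree. Pick a Kähler class $\omega$ so that $(e_\omega, h, f_\omega)$ is an $\mathfrak{sl}_2$-triple inside $\LLV$; conjugating $\xi_{-2}$ by a suitable element of the group generated by $e_\omega$ and $f_\omega$ (i.e. applying $\exp(\operatorname{ad})$ of these $\mathfrak{sl}_2$-elements, which preserve $\LLV$ and preserve $SH^2(X,\mathbb{R})$ since $e_\omega$ and $f_\omega$ both do) moves $\xi_{-2}$ into the span of $\LLV_0$ and $\LLV_2$ up to terms we already control; more concretely, in the $\mathfrak{sl}_2$-representation $\LLV = \LLV_{-2}\oplus\LLV_0\oplus\LLV_2$ the element $\operatorname{ad}(e_\omega)^2$ maps $\LLV_{-2}$ isomorphically onto $\LLV_2$, so if $\xi_{-2}$ annihilates $SH^2(X,\mathbb{R})$ then so does $[e_\omega,[e_\omega,\xi_{-2}]] \in \LLV_2$, which forces that element to be zero by the already-treated degree-$2$ case, hence $\xi_{-2} = 0$.

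The main obstacle is the degree $-2$ case: unlike $e_a$, the operators $f_a$ do not have an obvious explicit action on the generators $H^2(X,\mathbb{R})$, so one cannot argue directly. The $\mathfrak{sl}_2$-symmetry trick above is what resolves it — the key point being that the whole triple $(e_\omega,h,f_\omega)$ acts on $SH^2(X,\mathbb{R})$ (indeed $SH^2(X,\mathbb{R})$ is a $\LLV$-submodule by Corollary \ref{cor:verbrep}), so $\operatorname{ad}(e_\omega)$ preserves both $\LLV$ and the property of annihilating $SH^2(X,\mathbb{R})$, and on the $\mathfrak{sl}_2$-module $\LLV$ the composite $\operatorname{ad}(e_\omega)^2 \colon \LLV_{-2} \to \LLV_2$ is an isomorphism by the Lefschetz property of the adjoint representation. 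One should double-check that $f_\omega$ genuinely acts on $SH^2(X,\mathbb{R})$ and that the relevant $\mathfrak{sl}_2$ sits inside $\LLV$; both follow from $\omega \in H^2(X,\mathbb{R})$ being a Lefschetz class together with Corollary \ref{cor:verbrep}.
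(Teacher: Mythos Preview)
Your proof has a genuine circularity in the degree-$0$ case. You argue that $\xi_0 \in \LLV_0'$ must vanish because ``$\LLV_0' \cong \mathfrak{so}(H^2(X,\mathbb{R}),q)$ acts by its standard (faithful) representation on $H^2(X,\mathbb{R})$.'' But look at where this lemma sits in the paper: it is invoked precisely to prove the injectivity of the map $\LLV_0' \to \mathfrak{so}(H^2(X,\mathbb{R}),q)$ in the proof of Theorem~\ref{thm:LL}(3). So the isomorphism you are citing --- in particular the faithfulness of the $H^2$-action --- is exactly what the lemma is being used to establish. You may freely use everything proved \emph{before} that point (the degree decomposition, $\LLV_0 = \LLV_0' \oplus \mathbb{R}h$, the derivation property, surjectivity onto $\mathfrak{so}$), but not the injectivity.

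The paper's own argument avoids this trap: after reducing to $K \subset \LLV_0'$ (which is essentially your $\xi_{\pm 2}$ and $h$ steps), it uses the derivation identity \eqref{eq:derivations} to get $[K,\LLV_2]=0$, then integrates to $[K,\LLV_{-2}]=0$, hence $K$ is central, hence zero by semisimplicity. No appeal to $\mathfrak{so}(H^2)$ is needed.

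A minor slip: $h$ acts on $H^{2n}(X,\mathbb{R})$ by $(2n-2n)=0$, not by $n$. Use $H^0$ (scalar $-2n$) or $H^{4n}$ (scalar $2n$) instead.

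Your treatment of $\xi_2$ is fine, and your $\mathfrak{sl}_2$-trick for $\xi_{-2}$ (using that $\mathrm{ad}(e_\omega)^2\colon \LLV_{-2}\to\LLV_2$ is an isomorphism) is correct and elegant --- these together are what the paper summarizes as ``it is immediate that $K \subset \LLV_0'$.''
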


\begin{proof}
Let $K \subset \LLV_{\mathbb{R}}$ be the kernel. It is immediate to see that $K \subset \LLV'_0$. The action of $K$ is $0$ on $H^2(X,\mathbb{R})$, so by \eqref{eq:derivations} we get $[K,\LLV_{\mathbb{R},2}]=0$. Taking the Lie group of $K$ and the corresponding adjoint action, we see that $[K,f_a]=0$ for every $a \in H^2(X,\mathbb{R})$ for which $f_a$ is defined. So $K$ has bracket $0$ with $\LLV_{\mathbb{R},2}$ and $\LLV_{\mathbb{R},-2}$, thus also with $\LLV_{\mathbb{R},0}$. Since $\LLV$ is semisimple this implies $K$=0.
\end{proof}

\section{Verbitsky's Theorem}

In this section we give a proof of a result by Verbitsky on the structure of the irreducible component $SH^2(X)$. The argument presented was given by Bogomolov in \cite{Bo96:BogomolovSH}. 

\begin{thm}\label{thm:Ver}
There is a natural isomorphism of algebras and $\LLV_0$-representations: 
\[ SH^2(X,\mathbb{\mathbb{C}}) \cong \mathrm{Sym}^*(H^2(X,\mathbb{C}))/\langle \alpha^{n+1} \mid q(\alpha)=0 \rangle.\]
\end{thm}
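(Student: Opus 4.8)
The plan is to construct the natural surjection $\mathrm{Sym}^*(H^2(X,\mathbb{C})) \twoheadrightarrow SH^2(X,\mathbb{C})$ coming from the definition of the Verbitsky component (Definition \ref{Def:Verbitsky}), identify its kernel as the ideal $\langle \alpha^{n+1} \mid q(\alpha)=0 \rangle$, and check $\LLV_0$-equivariance. First I would observe that $\mathrm{Sym}^*(H^2)$ carries an action of $\LLV_0 = \LLV_0' \oplus \mathbb{C}h$: the semisimple part $\LLV_0' \cong \mathfrak{so}(H^2,q)$ acts by derivations extending its defining action on $H^2$ (this is exactly how $\LLV_0'$ acts on $SH^2$ by Theorem \ref{thm:LL}(3)), and $h$ acts by the grading operator suitably shifted. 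The multiplication map $\mathrm{Sym}^*(H^2) \to SH^2$ is then $\LLV_0$-equivariant essentially by construction, so the only real content is the computation of the kernel.

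For the kernel, the key point is that $\alpha^{n+1} = 0$ in $H^*(X)$ whenever $q(\alpha)=0$. If $\alpha$ is a K\"ahler class we would have $q(\alpha) \neq 0$, so the relation is vacuous there; the relevant classes are the isotropic ones, and the cleanest route is to note that the holomorphic symplectic form $\sigma_I$ (or rather its real and imaginary parts) spans an isotropic direction, and that $\sigma_I^{n+1} = 0$ for dimension reasons on a hyperk\"ahler manifold of complex dimension $2n$ — indeed $\sigma_I^n$ trivializes the canonical bundle and $\sigma_I^{n+1} \in H^{2n+2,0} = 0$. By the transitivity of $\mathrm{SO}(H^2,q)$ (or its complexification) on the isotropic cone, together with the fact that $\LLV_0' \cong \mathfrak{so}(H^2,q)$ acts on $SH^2$ by algebra derivations, every isotropic $\alpha$ is in the orbit of such a class, and the relation $\alpha^{n+1}=0$ propagates: if $g \in \mathrm{SO}(H^2,q)$ then $g(\alpha)^{n+1} = g \cdot (\alpha^{n+1}) = 0$. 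So the ideal $\langle \alpha^{n+1} \mid q(\alpha)=0\rangle$ lies in the kernel, giving a surjection $R := \mathrm{Sym}^*(H^2)/\langle \alpha^{n+1} \mid q(\alpha)=0\rangle \twoheadrightarrow SH^2(X,\mathbb{C})$.

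To finish I need this surjection to be injective, i.e.\ a dimension (or character) count in each graded piece. Here I would compute the Hilbert series of $R$: the quotient of $\mathrm{Sym}^*$ of a quadratic space of dimension $b_2 := \dim H^2$ by the ideal generated by $(n+1)$-st powers of isotropic vectors is a classical object — as an $\mathrm{SO}(b_2)$-representation, $R$ in degree $k$ for $k \le n$ is the full $\mathrm{Sym}^k(H^2)$, and for $k > n$ it is the space of "harmonic-like" truncations; more precisely $R$ is known to be isomorphic as a graded $\mathrm{SO}(H^2,q)$-module to $\bigoplus_{k=0}^{2n} V_{[\min(k,2n-k)]}$ where $V_{[j]}$ is the $j$-th symmetric traceless power (this is the structure of the cohomology ring of the relevant flag/quadric-type variety). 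On the other side, $SH^2(X,\mathbb{C})$ as a $\LLV$-module is irreducible by Corollary \ref{cor:verbrep}, and its structure as an $\mathfrak{sl}_2 \times \mathrm{SO}(H^2,q)$-representation under $\LLV_{\pm 2} \oplus \LLV_0'$ forces exactly the same graded character — the Lefschetz $\mathfrak{sl}_2$ pins down the "width" in each degree and $\mathrm{SO}(H^2,q)$-irreducibility in each graded piece (via the primitive decomposition) pins down the isotypic content.

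\textbf{Main obstacle.} The hard part is the injectivity, i.e.\ matching dimensions degree by degree. Showing $R$ surjects onto $SH^2$ is soft (orbit argument plus $\sigma_I^{n+1}=0$), but to rule out that the kernel is strictly larger one must either (i) exhibit enough classes in $SH^2(X,\mathbb{C})$ in each degree $\le 2n$ — for instance by producing, for a generic tuple of K\"ahler classes $\omega_1,\dots,\omega_k$, that the products $\omega_1 \cdots \omega_k$ are linearly independent modulo the expected relations, which is where a genericity/Zariski-density argument over the positive cone enters — or (ii) independently compute the Hilbert function of the abstract ring $R$ and compare. Route (i) is closer in spirit to Bogomolov's argument and is what I would pursue: fix a hyperk\"ahler metric, use that the three K\"ahler classes $[\omega_I],[\omega_J],[\omega_K]$ together with the Lefschetz $\mathfrak{sl}_2$-triples they generate act faithfully (Lemma \ref{lem:injectivity}), and leverage that a generic element of $H^2(X,\mathbb{C})$ has $q \neq 0$ and hence the full Lefschetz package, so no "extra" relations beyond the isotropic ones can hold generically — any additional relation would be a polynomial identity vanishing on a Zariski-dense set of K\"ahler-type classes, forcing it into the ideal we already have. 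Pinning down this last density step precisely is the delicate point.
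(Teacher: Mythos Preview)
Your setup and the surjectivity are fine. Your route to the relations $\alpha^{n+1}=0$ for isotropic $\alpha$, via $\sigma_I^{n+1}\in H^{2n+2,0}=0$ together with transitivity of $\mathrm{SO}(H^2,q)(\mathbb{C})$ on nonzero isotropic vectors, is a legitimate alternative to the paper's argument (which instead invokes the Local Torelli theorem to see that the classes of symplectic forms sweep out an open piece of the quadric, then closes up by Zariski density). Either way one obtains the surjection $A \coloneqq \mathrm{Sym}^*(H^2)/\langle \alpha^{n+1}\mid q(\alpha)=0\rangle \twoheadrightarrow SH^2(X,\mathbb{C})$, and equivariance follows as you say from $\LLV_0'$ acting by derivations.

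The genuine gap is exactly where you flag it: injectivity. Your two proposed routes (matching Hilbert series, or a genericity argument over K\"ahler classes) are plausible but you do not carry either one out, and the second in particular is vague as stated --- an extra relation would be a polynomial vanishing on the \emph{anisotropic} locus, not on a closed set, so Zariski density does not directly force it into the ideal generated by isotropic $(n+1)$-st powers. The paper bypasses all of this with a single structural fact you are missing, Lemma~\ref{lem:Vertrick}: the abstract ring $A$ is a graded Poincar\'e duality algebra, i.e.\ $A_{2n}\cong\mathbb{C}$ and multiplication $A_k\times A_{2n-k}\to A_{2n}$ is a perfect pairing. Granting this, injectivity is immediate. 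The kernel is a graded ideal; if it contained a nonzero homogeneous $x\in A_k$, then by the perfect pairing some product $xy\in A_{2n}$ would be nonzero, so the kernel would contain the one-dimensional top piece $A_{2n}$. But the map $A_{2n}\to H^{4n}(X,\mathbb{C})$ is nonzero because $(\sigma+\bar\sigma)^{2n}$ is a nonvanishing top form. Thus no degree-by-degree dimension comparison is needed --- the whole burden is shifted to proving Poincar\'e duality for the purely algebraic object $A$, a representation-theoretic computation for $\mathfrak{so}(H^2,q)$ independent of any geometry on $X$.
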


The key technical fact is the following lemma from representation theory, of which we omit the proof.

\begin{lem}\label{lem:Vertrick}
Denote by $A$ the graded $\mathbb{C}$-algebra $\mathrm{Sym}^*(H^2(X,\mathbb{C}))/\langle \alpha^{n+1} \mid q(\alpha)=0 \rangle$. Then we have:
\begin{enumerate}
    \item $A_{2n} \cong \mathbb{C}$.
    \item The multiplication map $A_{k} \times A_{2n-k} \rightarrow A_{2n}$ induces a perfect pairing.
\end{enumerate}
\end{lem}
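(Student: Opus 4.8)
The plan is to compute the graded algebra $A$ explicitly as an $O(V)$-module, where $V = H^2(X,\mathbb{C})$ carries the nondegenerate Beauville--Bogomolov--Fujiki form $q$; throughout $A$ is graded so that $V$ sits in degree $1$, whence $A_k$ is the image of $\mathrm{Sym}^k(V)$ and $A_{2n}$ is the top piece. This makes the argument purely algebraic (in particular independent of Theorem \ref{thm:Ver}). Using $q$ to identify $V \cong V^*$, I invoke the classical harmonic decomposition $\mathrm{Sym}^d(V) = \bigoplus_{j\geq 0} q^j\,\mathcal{H}^{d-2j}$, where $\mathcal{H}^e \subset \mathrm{Sym}^e(V)$ is the space of harmonic (traceless) tensors, an irreducible $O(V)$-module, and $q \in \mathrm{Sym}^2(V)$ is the form itself. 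Since $\{\alpha^{n+1}\mid q(\alpha)=0\}$ spans a nonzero $O(V)$-submodule of the irreducible $\mathcal{H}^{n+1}$ (each $\alpha^{n+1}$ being harmonic, as its Laplacian is a multiple of $q(\alpha)\alpha^{n-1}=0$), it equals $\mathcal{H}^{n+1}$; thus the defining ideal is $J=\langle\mathcal{H}^{n+1}\rangle$. The central claim is
\[ A \;\cong\; \bigoplus_{j+e\leq n,\; j,e\geq 0} q^j\,\mathcal{H}^e, \qquad q^j\mathcal{H}^e \subset A_{2j+e}, \]
from which both assertions follow quickly.

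To prove this, set $C = \bigoplus_{j+e\geq n+1} q^j\mathcal{H}^e$. First, $C$ is an ideal: multiplying $q^j\mathcal{H}^e$ by $V=\mathcal{H}^1$ lands in $q^j\mathcal{H}^{e+1}\oplus q^{j+1}\mathcal{H}^{e-1}$, and both summands have ``level'' (the quantity $j+e$) at least $j+e\geq n+1$, so $V\cdot C\subseteq C$. As the generator $\mathcal{H}^{n+1}$ has level $n+1$ and lies in $C$, we get $J\subseteq C$. For the reverse inclusion I show every $q^j\mathcal{H}^e$ with $j+e\geq n+1$ lies in $J$, by induction on the degree $d=2j+e$. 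The base case $d=n+1$ forces $j=0$, $e=n+1$, the generator. In the inductive step, when $j\geq 1$ I take the parent $q^{j-1}\mathcal{H}^{e+1}$, of the same level $j+e$ and of degree $d-1$, hence in $J$ by induction; multiplying by $V$ and projecting onto the trace summand $q^j\mathcal{H}^e$ yields the component. When $j=0$ I use instead the parent $\mathcal{H}^{e-1}$ of degree $d-1$ and level $e-1=d-1\geq n+1$, projecting onto the top (Cartan) summand $\mathcal{H}^e$. This closes the induction, so $C=J$ and the claim holds.

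Granting the structure, assertion (1) is immediate: for $2j+e=2n$ the constraint $j+e\leq n$ forces $j=n$, $e=0$, so $A_{2n}=q^n\mathcal{H}^0=\mathbb{C}\,q^n$. For (2), the pairing sends a product to its $q^n\mathcal{H}^0$-part; writing $q^j\mathcal{H}^e\cdot q^{j'}\mathcal{H}^{e'}=q^{j+j'}(\mathcal{H}^e\cdot\mathcal{H}^{e'})$, the $\mathcal{H}^0$-component of $\mathcal{H}^e\cdot\mathcal{H}^{e'}$ is nonzero exactly when $e=e'$ (the $\mathcal{H}^e$ being pairwise non-isomorphic and self-dual as $O(V)$-modules), in which case it is $q^e$ times the nondegenerate invariant form on $\mathcal{H}^e$. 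Since $2j+e=k$ and $2j'+e=2n-k$ force $j+j'+e=n$ automatically, the total $q$-power is exactly $q^n$. Hence, with respect to the isotypic decompositions of $A_k$ and $A_{2n-k}$, the induced pairing is block-diagonal in $e$, each block being the nondegenerate form on $\mathcal{H}^e$; as the same set of harmonic degrees $e$ occurs on both sides, the pairing is perfect.

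The main obstacle is the representation-theoretic input in the inductive step: that symmetric multiplication $V\otimes\mathcal{H}^e\to\mathrm{Sym}^{e+1}(V)$ surjects onto \emph{both} the Cartan component $\mathcal{H}^{e+1}$ and the trace component $q\,\mathcal{H}^{e-1}$. Surjectivity onto $\mathcal{H}^{e+1}$ is automatic, but surjectivity onto the trace piece---equivalently, non-vanishing of the contraction $V\otimes\mathcal{H}^e\to\mathcal{H}^{e-1}$ realized through multiplication---is the one genuinely classical fact about spherical harmonics on which the argument rests; it holds for $q$ nondegenerate and $\dim V\geq 3$, which is satisfied by the Beauville--Bogomolov--Fujiki lattice.
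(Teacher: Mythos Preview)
The paper explicitly omits the proof of this lemma (``of which we omit the proof''), so there is no in-paper argument to compare against. Your approach via the harmonic decomposition of $\mathrm{Sym}^*(V)$ under $O(V)$ is the standard one and is essentially what one finds in Bogomolov's paper \cite{Bo96:BogomolovSH}, which the present paper cites for the surrounding theorem. The identification $J=\bigoplus_{j+e\geq n+1}q^j\mathcal{H}^e$ is carried out correctly; the only subtlety you might make more explicit is that the ``projection onto the trace summand'' in the inductive step really does stay inside $J$, which follows because $J$ is $O(V)$-stable and the summands $q^j\mathcal{H}^e$ in a fixed degree are pairwise non-isomorphic (here $\dim V\geq 3$ is used).

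There is one genuine, if minor, gap in your treatment of~(2). You assert that the diagonal block --- the $q^e\mathcal{H}^0$-component of the product $\mathcal{H}^e\cdot\mathcal{H}^e$ --- is ``the nondegenerate invariant form on $\mathcal{H}^e$''. Schur's lemma only tells you it is either zero or a nonzero multiple of that form; nonvanishing has to be checked. This is in fact a consequence of the very ``classical fact'' you already invoke: from $\Delta(hh')=2\sum_i(\partial_ih)(\partial_ih')$ for $h,h'\in\mathcal{H}^e$ one gets $B_e(h,h')=c\sum_i B_{e-1}(\partial_ih,\partial_ih')$, and since the contraction $\mathcal{H}^e\hookrightarrow V\otimes\mathcal{H}^{e-1}$ is injective onto the unique $\mathcal{H}^e$-summand, nondegeneracy of $B_{e-1}$ forces $B_e\neq 0$ by induction. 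Alternatively, and perhaps more cleanly, once you know $A=\bigoplus_{j+e\leq n}q^j\mathcal{H}^e$ and the trace map $V\otimes\mathcal{H}^e\to\mathcal{H}^{e-1}$ is onto, it follows that no isotypic piece $q^j\mathcal{H}^e$ with $(j,e)\neq(n,0)$ is annihilated by $V$ in $A$; hence $\mathrm{soc}(A)=A_{2n}$, which together with $\dim A_k=\dim A_{2n-k}$ gives the perfect pairing directly, bypassing the block computation.
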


\begin{proof}[Proof of the theorem]
From the Local Torelli Theorem we have that $\alpha^{n+1}=0$ for an open subset of the quadric $\{ \alpha \in H^2(X,\mathbb{C}) \mid q(\alpha)=0  \}$. Since the condition $\alpha^{n+1}=0$ is Zariski closed, we get that it holds for the entire quadric. Consider the multiplication map \[\mathrm{Sym}^*(H^2(X,\mathbb{C})) \rightarrow SH^2(X,\mathbb{C}).\] The kernel contains $\{ \alpha^{n+1} \mid q(\alpha)=0 \}$, hence it factors via the ring $A$. It is an algebra homomorphism by construction, and a map of $\LLV_0$-representations because $\LLV'_0$ acts via derivations. 

The induced map $A \rightarrow SH^2(X,\mathbb{C})$ is surjective by construction. If it were not injective, by the above lemma, the kernel would contain $A_{2n}$. But this is impossible, because in top degree the map $A_{2n} \rightarrow H^{4n}(X,\mathbb{C})$ is non-zero. Indeed if $\sigma$ is a holomorphic symplectic form, the form $(\sigma +\overline{\sigma})^{2n}$ is non-zero.
\end{proof}

\begin{cor}
There are natural isomorphisms defined over $\mathbb{Q}$
\[ 
SH^2(X,\mathbb{Q})_{2k} \cong 
\begin{cases}
\mathrm{Sym}^{k} H^2(X,\mathbb{Q}) \textrm{ if } k\leq n, \\
\mathrm{Sym}^{2n-k} H^2(X,\mathbb{Q}) \textrm{ if } n < k \leq 2n.
\end{cases}
\]
\end{cor}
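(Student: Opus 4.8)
The plan is to deduce the statement from Theorem~\ref{thm:Ver} together with Lemma~\ref{lem:Vertrick}, keeping careful track of rationality. By Theorem~\ref{thm:Ver} we have an isomorphism of graded algebras $SH^2(X,\mathbb{C}) \cong A_{\mathbb{C}}$, where $A = \mathrm{Sym}^*(H^2(X,\mathbb{Q}))/\langle \alpha^{n+1} \mid q(\alpha)=0\rangle$; note that the ideal $\langle \alpha^{n+1} \mid q(\alpha)=0\rangle$ is generated by elements defined over $\mathbb{Q}$ (the quadric $q(\alpha)=0$ is defined over $\mathbb{Q}$ and has a Zariski-dense set of $\mathbb{Q}$-points, or one simply takes the ideal generated by all $\alpha^{n+1}$ with $q(\alpha)=0$ as a subscheme-theoretic ideal), so $A$ is defined over $\mathbb{Q}$ and $A_{\mathbb{C}} = A \otimes_{\mathbb{Q}} \mathbb{C}$. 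Moreover the isomorphism of Theorem~\ref{thm:Ver} is the one induced by the multiplication map $\mathrm{Sym}^*(H^2(X,\mathbb{Q})) \to SH^2(X,\mathbb{Q})$, which is visibly defined over $\mathbb{Q}$; hence $SH^2(X,\mathbb{Q}) \cong A$ as graded $\mathbb{Q}$-algebras. So it suffices to compute $A_{2k}$ (I index $A$ so that $H^2$ sits in degree $2$, matching $SH^2(X)_2 = H^2(X)$).

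For $k \le n$, I claim $A_{2k} = \mathrm{Sym}^k H^2(X,\mathbb{Q})$: the defining relations live in degrees $\ge 2(n+1) > 2k$, so the quotient map $\mathrm{Sym}^k H^2(X,\mathbb{Q}) \to A_{2k}$ is an isomorphism. For $n < k \le 2n$, I would use part~(2) of Lemma~\ref{lem:Vertrick}: the multiplication pairing $A_{2k} \times A_{2(2n-k)} \to A_{4n} \cong \mathbb{Q}$ is perfect (Lemma~\ref{lem:Vertrick} is stated over $\mathbb{C}$, but the pairing is defined over $\mathbb{Q}$, and a $\mathbb{Q}$-bilinear pairing that becomes perfect after base change to $\mathbb{C}$ is already perfect). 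Since $2n-k$ satisfies $0 \le 2n-k < n$, the first case gives $A_{2(2n-k)} \cong \mathrm{Sym}^{2n-k} H^2(X,\mathbb{Q})$, and perfect duality yields $A_{2k} \cong (\mathrm{Sym}^{2n-k} H^2(X,\mathbb{Q}))^{\vee}$. Finally I would invoke the fact that $H^2(X,\mathbb{Q})$ carries the nondegenerate quadratic form $q$, which identifies $H^2(X,\mathbb{Q}) \cong H^2(X,\mathbb{Q})^{\vee}$ canonically, hence $\mathrm{Sym}^{2n-k} H^2(X,\mathbb{Q}) \cong (\mathrm{Sym}^{2n-k} H^2(X,\mathbb{Q}))^{\vee}$ as well (the induced form on $\mathrm{Sym}^{2n-k}$ is still nondegenerate in characteristic $0$). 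Composing gives the stated isomorphism $SH^2(X,\mathbb{Q})_{2k} \cong \mathrm{Sym}^{2n-k} H^2(X,\mathbb{Q})$.

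The main obstacle is bookkeeping about fields of definition: Theorem~\ref{thm:Ver} and Lemma~\ref{lem:Vertrick} are both phrased over $\mathbb{C}$, so one must check at each step that the relevant maps, pairings, and the quadratic form $q$ are defined over $\mathbb{Q}$ and that passing to $\mathbb{C}$ does not lose or gain anything — which it does not, since all constructions are base change of $\mathbb{Q}$-linear data and perfectness of a pairing is insensitive to flat field extension. A secondary point to spell out is that the identification via $q$ is exactly what makes the answer canonical (defined over $\mathbb{Q}$ and independent of choices), rather than merely an abstract dimension count; one should remark that $\dim \mathrm{Sym}^k H^2 = \dim \mathrm{Sym}^{2n-k} H^2$ fails in general, so the symmetry genuinely comes from the relations, not from numerics.
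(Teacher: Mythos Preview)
Your proposal is correct and follows essentially the same route as the paper: use Theorem~\ref{thm:Ver} to transfer the properties of Lemma~\ref{lem:Vertrick} to $SH^2(X,\mathbb{Q})$, observe that the multiplication map $\mathrm{Sym}^k H^2(X,\mathbb{Q}) \to SH^2(X,\mathbb{Q})_{2k}$ is an isomorphism for $k \le n$ since the relations live in higher degree, and for $k>n$ use the perfect pairing together with the Beauville--Bogomolov--Fujiki form to identify $\mathrm{Sym}^{2n-k}H^2(X,\mathbb{Q})$ with its dual. Your write-up is just more explicit than the paper's about the descent from $\mathbb{C}$ to $\mathbb{Q}$, which is a good thing.
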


\begin{proof}
From Theorem \ref{thm:Ver} it follows that the properties $(1)$ and $(2)$ in Lemma \ref{lem:Vertrick} hold for $SH^2(X,\mathbb{C})$. Up to changing the isomorphism $SH^2(X,\mathbb{C})_{2n} \cong \mathbb{C}$, they also hold for $SH^2(X,\mathbb{Q})$. The multiplication map $\mathrm{Sym}^{k} H^2(X,\mathbb{Q}) \rightarrow SH^2(X,\mathbb{Q})_{2k}$ is an isomorphism if $k \leq n$, because it is so over $\mathbb{C}$. If $k>n$ we have \[ SH^2(X,\mathbb{Q})_{2k} \cong SH^2(X,\mathbb{Q})^*_{4n-2k} \cong \mathrm{Sym}^{2n-k} H^2(X,\mathbb{Q})^* \cong \mathrm{Sym}^{2n-k} H^2(X,\mathbb{Q}),\]
where the last equality is due to the Beauville--Bogomolov--Fujiki form. 
\end{proof}

\begin{ex}
If $X$ is of $\mathrm{K3}^{[2]}$-type, for dimensional reasons, the Verbitsky component $SH(X)$ is the only irreducible component in the cohomology. For higher values of $n$ the decomposition of $H^*(X,\mathbb{Q})$ in irreducible components is described in \cite{GKLR20}, for more details on this see \cite{notes:OS}.
\end{ex}

\section{Spin action}
In this section we study how the action of $\mathfrak{so}(H^2(X,\mathbb{Q}),q)$ integrates to an action of the simply connected algebraic group $\Spin$. Recall that there is an exact sequence of algebraic groups 
\[ 1 \rightarrow {\pm 1} \rightarrow \Spin \rightarrow \SO \rightarrow 1.  \]
For more information see \cite{notes:B} and \cite{notes:OS}.

\begin{prop}[{{\cite[Theorem $4.4$]{Eyal:monodromy},\cite{VerbitskyMirror}}}] 
The action of $\mathfrak{so}(H^2(X,\mathbb{Q}),q)$ on $H^*(X,\mathbb{Q})$ integrates to an action of the algebraic group $\Spin$ via ring isomorphisms. On the even cohomology it induces an action of $\SO$. 
\end{prop}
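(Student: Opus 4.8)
The plan is to first establish the integration abstractly, then check the two refinements (ring isomorphisms and the $\underline{\mathrm{SO}}$-action on even cohomology). By Theorem~\ref{thm:LL} we have $\LLV_0' \cong \mathfrak{so}(H^2(X,\mathbb{Q}),q)$, acting on the finite-dimensional vector space $H^*(X,\mathbb{Q})$; this gives a morphism of Lie algebras $\mathfrak{so}(H^2(X,\mathbb{Q}),q) \rightarrow \mathfrak{gl}(H^*(X,\mathbb{Q}))$. Since $\Spin$ is simply connected as an algebraic group, any finite-dimensional representation of its Lie algebra integrates uniquely to an algebraic representation of the group: concretely, decompose $H^*(X,\mathbb{Q})$ (after extending scalars, but by rationality of the representation the resulting group action is defined over $\mathbb{Q}$) into irreducibles, and on each irreducible the highest-weight theory together with simple connectedness of $\Spin$ produces the integrated action. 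Hence there is an algebraic action $\Spin \rightarrow GL(H^*(X,\mathbb{Q}))$ whose differential is the given $\mathfrak{so}$-action.

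Next I would upgrade this to an action by ring isomorphisms. The key input is the last clause of Theorem~\ref{thm:LL}: $\LLV_0'$ acts on $H^*(X,\mathbb{Q})$ by derivations. A derivation of the cup-product algebra exponentiates to an algebra automorphism, so elements in the image of the exponential act by ring isomorphisms. Since $\Spin$ is connected, it is generated by such one-parameter subgroups (equivalently, the condition ``$t$ acts by a ring homomorphism'' is a Zariski-closed subgroup of $\Spin$ containing a neighbourhood of the identity, hence all of $\Spin$). Therefore the whole $\Spin$-action is by ring isomorphisms.

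Finally, for the even cohomology: the element $h \in \LLV_0$ acts on $H^k(X,\mathbb{Q})$ as $(k-2n)\cdot\mathrm{id}$, so on $H^{\mathrm{even}}(X,\mathbb{Q})$ the induced grading operator takes only even values. Consequently $H^{\mathrm{even}}(X,\mathbb{Q})$ is a representation of $\LLV_0'$ on which the relevant weights pair integrally with the coweight lattice of $\underline{\mathrm{SO}}$ rather than only of $\Spin$; said differently, one checks that the central element $-1 \in \Spin$ (the kernel of $\Spin \twoheadrightarrow \underline{\mathrm{SO}}$) acts trivially on $H^{\mathrm{even}}(X,\mathbb{Q})$. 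This can be seen either by the weight computation, or geometrically: $-1$ lies in the image of $\exp$ applied to a suitable element of $\mathfrak{so}(H^2(X,\mathbb{Q}),q)$ whose action on even cohomology is by $2\pi$-periodic one-parameter subgroups. Hence the action on $H^{\mathrm{even}}(X,\mathbb{Q})$ factors through $\underline{\mathrm{SO}}(H^2(X,\mathbb{Q}),q)$, still by ring isomorphisms by the previous paragraph.

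The main obstacle is the even-cohomology claim: proving that $-1 \in \Spin$ acts trivially on $H^{\mathrm{even}}$ requires knowing enough about the weights appearing there, i.e.\ that every irreducible $\LLV$-constituent of $H^{\mathrm{even}}(X,\mathbb{Q})$ has highest weight in the root lattice direction compatible with $\underline{\mathrm{SO}}$. One clean way around this is to use the description $SH^2(X,\mathbb{C}) \cong \mathrm{Sym}^*(H^2)/\langle \alpha^{n+1}\mid q(\alpha)=0\rangle$ from Theorem~\ref{thm:Ver}, where the Verbitsky component is manifestly built out of $\mathrm{Sym}^k H^2$ (tensor powers of the standard representation in even degree), on which $-1$ acts trivially; combined with the primitive decomposition of Section~4 and an analysis of how the other irreducible constituents sit in even versus odd cohomology, this pins down the parity. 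Carrying out that bookkeeping for all $n$ is the only genuinely non-formal step.
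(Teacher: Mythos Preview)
Your first two steps---integrating via simple connectedness of $\Spin$ and upgrading to ring isomorphisms from the derivation property of $\LLV_0'$---match the paper's proof exactly.

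The even-cohomology step is where you diverge. You correctly reduce to showing that $-1 \in \Spin$ acts trivially on $H^{\mathrm{even}}$, and you even gesture at the right idea (``$-1$ lies in the image of $\exp$ applied to a suitable element\ldots''), but you then back away from it and propose instead a weight analysis via the Verbitsky component and the primitive decomposition, which you yourself flag as unfinished bookkeeping. The paper simply executes the direct route you abandoned. Fix a hyperk\"ahler metric $g$ with a compatible complex structure $I$; by Theorem~\ref{valg} the Weil operator for $I$ lies in $(\mathfrak{g}_g)'_0 \subset \mathfrak{so}(H^2(X,\mathbb{Q}),q)$ and acts on $H^{p,q}$ as multiplication by $i(p-q)$. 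Hence the group element $\exp(\pi I) \in \Spin$ acts on $H^k(X,\mathbb{C})$ as $e^{i\pi(p-q)}=(-1)^k$. In particular it is the identity on $H^2$, so it lies in $\ker(\Spin \to \SO)$, i.e.\ $\exp(\pi I)=-1$; and it acts trivially on $H^{\mathrm{even}}$. This single Hodge-theoretic computation replaces your proposed decomposition analysis entirely---no information about the other irreducible constituents of $H^*(X,\mathbb{Q})$ is needed.
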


\begin{proof}
The first part of the statement is clear: we can always lift the action because the algebraic group $\Spin$ is simply connected. The group $\Spin$ acts via ring isomorphisms because the Lie algebra acts via derivations. 

To show the second part of the statement we proceed as follows. Fix a hyperk\"ahler metric $g$ and a compatible complex structure $I$. The Weil operator with respect to $I$ is contained in $(\mathfrak{g}_g)'_0 \cong \mathfrak{so}(H^2(X,\mathbb{Q}))$. The exponential $\exp(\pi I) \in \Spin$ acts on the $(p,q)$ part of $H^k(X,\mathbb{C})$ as multiplication by $e^{i(p-q)\pi}$, which is just multiplication by $(-1)^k$. In particular, on $H^2(X,\mathbb{Q})$ it acts as the identity, so $\exp(\pi I)=-1 \in \Spin$. We have also shown that $-1 \in \Spin$ acts on $H^k(X,\mathbb{Q})$ as $(-1)^k$, which means that the action on even cohomology factors through $\SO$.
\end{proof}

\subsection*{Acknowledgments}
I want to thank Fabrizio Anella, Olivier Debarre, Daniel Huybrechts and Mauro Varesco for reading these notes and for their comments. 

\bibliography{HyperK}
\bibliographystyle{alphaspecial}
\end{document}